\theoremstyle{definition}
\theoremstyle{definition}
\theoremstyle{plain}
\newtheorem{thm}{Theorem}[section]
\newtheorem{lemma}[thm]{Lemma}
\newtheorem{remark}[thm]{Remark}
\def\be{\begin{eqnarray*}}
\def\ee{\end{eqnarray*}}
\def\ben{\begin{eqnarray}}
\def\een{\end{eqnarray}}
\def\be{\begin{eqnarray*}}
\def\ee{\end{eqnarray*}}
\def\wt{\widetilde}
\def\wh{\widehat}
\newcommand{\dive}{\operatorname{div}}
\newcommand{\Dive}{\operatorname{Div}}
\numberwithin{equation}{section}
\begin{document}

\baselineskip=17pt


\title{Biharmonic obstacle problem: guaranteed and computable error bounds for approximate solutions}

\author{{\textsc{ Darya E.  Apushkinskaya}}\\
Department of Mathematics, Saarland University, P.O. Box 151150\\
66041 Saarbr{\"u}cken, Germany\\
\textit{E-mail: darya@math.uni-sb.de}\\
Peoples Friendship University of Russia (RUDN University)\\
6 Miklukho-Maklaya St, Moscow, 117198, Russian Federation\\
\\
{\textsc{ Sergey I. Repin}}\\
Steklov Institute of Mathematics at St. Petersburg,\\ Fontanka 27, 191023, St. Petersburg,
Russia\\
\textit{E-mail: repin@pdmi.ras.ru}\\
University of Jyv{\"a}skyl{\"a}, P.O. Box 35  (Agora),\\
FIN-40014, Finland \\
\textit{E-mail:serepin@jyu.fi}}

\date{\today}

\maketitle
\bibliographystyle{alpha}
\newcommand{\tg}{\textup{tg}}
\newcommand{\ep}{\varepsilon}
\newcommand{\osc}[1]{\underset{#1}{\textup{osc}}}

\begin{abstract}
The paper is concerned with a free boundary variational problem generated by the biharmonic operator
and an obstacle. The main goal is to deduce a fully guaranteed upper bound of the difference between the exact
minimizer u and any function (approximation) from the corresponding energy class (which consists of the functions
in $H^2$ satisfying the prescribed boundary conditions and the restrictions stipulated by the obstacle).  For this
purpose we use the duality method of the calculus of variations and general type error identities earlier derived
for a wide class of convex variational problems. By this method, we define 
a combined primal--dual measure of error. It contains four  terms of different nature. Two of them are the norms of the difference between the exact solutions (of the direct and dual variational problems) and corresponding approximations. Two others  are nonlinear measures,  related to approximation of the coincidence set (they vanish if the coincidence set defined by means of the approximate solution coincides with the exact one).  The measure satisfies the error identity, which right hand side depends on approximate solutions only and, therefore,  is fully
computable.  Thus, the identity provides direct estimation of the primal--dual errors.
However, it contains
a certain  restriction on the form of the dual approximation. In the second part of the paper, we present a way to  skip the restriction. As a result, we obtain a fully guaranteed and directly computable error majorant
valid for a wide class of approximations regardless  of the method used for their construction. The estimates
are verified in a series of tests with different approximate solutions.
Some of them are quite close to the exact solution and others are rather coarse
and have coincidence sets that differ much from the exact one. The results show that the estimates are robust and effective in all the cases.


\end{abstract}
\section{Introduction}

Let $\Omega$ be an open, connected, and bounded domain in $\mathbb{R}^d$ with Lipschitz continuous boundary $\partial\Omega$, let ${\boldsymbol \nu}$ be an outward unit normal to $\partial\Omega$, and let $\varphi$ be a given function (obstacle) in $C^2(\overline\Omega)$ such that $\varphi \leqslant 0$ on $\partial\Omega$. Throughout the paper, we use the standard notation for the Lebesgue and Sobolev spaces of functions. By $g_{\oplus}$   we denote $\max\{g,0\}$.

For a given function $f \in L^2(\Omega)$, we consider the following variational Problem ($\mathcal{P}$): minimize the functional
\begin{equation} \label{int}
J(v)=\int\limits_{\Omega}\left(\frac{1}{2}|\Delta v|^2-fv\right)dx
\end{equation}
over the closed convex set
$$
\mathbb{K}=\left\{v\in H^2(\Omega): v\big|_{\partial\Omega}=\frac{\partial v}{\partial\mathbf{n}}\bigg|_{\partial\Omega}=0,\ v \geqslant \varphi\ \text{a.e. in}\ \Omega\right\}.
$$
Here $\varphi$ is a given function (obstacle) such that
$\varphi \in C^2(\overline{\Omega})$ and $\varphi \leqslant 0$ on $\partial\Omega$.

Problem ($\mathcal{P}$) is called the \textit{biharmonic obstacle problem} with obstacle $\varphi$. Such problem has many applications in elasticity theory (frictionless equilibrium contact problems of elastic plates or beams over a rigid obstacle) and in fluid mechanics (incompressible fluid flow  at low Reynolds number in $\mathbb{R}^2$). 
By standard results \cite{LS67, L69} the problem ($\mathcal{P}$) has a unique solution $u$ which satisfies a.e.  in $\Omega$ the following relations
\begin{equation} \label{bi_ineq}
\Delta^2 u \geqslant f, \quad u \geqslant \varphi, \quad (\Delta^2 u -f)\cdot (u-\varphi)=0.
\end{equation}

 In particular, by the well-known works \cite{CF79} and \cite{Fr73}, we have the following a priori regularity 
\begin{equation} \label{reg_u}
 u\in H^3_{loc}(\Omega)\cap W^{2,\infty}_{loc}(\Omega)\qquad \text{and} \qquad \Delta u \in W^{2,\infty}_{loc}(\Omega).
\end{equation}

The domain $\Omega$ is divided in two subdomains $\Omega_0$ and $\Omega_{\varphi}$, where  $u$ has different properties. The equation $\Delta^2u=f$ holds in $\Omega_0$, while in $\Omega_{\varphi}$ the function $u$ coincides with the obstacle ($\Omega_{\varphi}$ is called the coincidence set). The interface between $\Omega_0$ and $\Omega_{\varphi}$ is apriori unknown. Therefore, the problem ($\mathcal{P}$) belongs to the class of free boundary problems

The biharmonic obstacle problems have been actively studied by many authors, starting with the pioneering works of Landau and Lifshitz \cite{LaLi59}, Frehse \cite{Fr71, Fr73}, Cimatti \cite{Cim73}, Stampacchia \cite{St75}, and Br\'{e}zis and Stampacchia \cite{BrSt77}.  We mention also the monographs by Duvaut and Lions \cite{DuLio76}, and by Rodrigues \cite{R87}, where some examples of the problem of bending a plate over an obstacle are considered. Notice that most of the studies of the  fourth order obstacle problems  were mainly focused either on regularity of minimizers  or on the properties of the respective free boundaries (see \cite{Fr71, Fr73, Cim73, CF79, CFT82, Sch86, Al19}).

Approximation methods for the biharmonic obstacle problem
have been developed within the framework of computational methods
for variational inequalities (e.g., see \cite{BSZZ12, GMV84, G84, HHN96, ItK90})
and related optimal control problems \cite{AHL10,ItK00}.
Hence, in principle, it is known how to construct a sequence of approximations
converging to the exact minimizer of this nonlinear variational problem.

In this paper, we are concerned with a different question. Our goal is to deduce a guaranteed and fully computable bound for the distance between the exact solution $u\in \mathbb{K}$ and an approximate solution $v\in \mathbb{K}$ measured in terms of natural energy norm. We apply the same method as
was used in \cite{NeR01} for the derivation of guaranteed error bounds
of the difference between exact solution of the linear biharmonic problem and
any function in the energy admissible class of functions. In \cite{Re03,Re08,RV18,AR18} and some other publications this method was
applied to obstacle problems associated with  elliptic  operators
of the second order.
Below we show that it is also quite efficient for higher order operators
and generates a natural error measure together with fully computable bounds 
of this measure for any function in the respective energy space compared
with the exact minimizer.

\section{Estimates of the distance to the exact solution}


\subsection{General form of the error identity}

Consider the functional spaces 
$$
V:=\big\{w \in H^2(\Omega) \mid w|_{\partial\Omega}=\frac{\partial w}{\partial \mathbf{n}}\big|_{\partial\Omega}=0 \big\}
$$ and $N:=L^2(\Omega, M^{d\times d}_{\rm Sym})$, where $M^{d \times d}_{\rm Sym}$ denotes the space of $d\times d$ symmetric matrices. The  corresponding conjugate (dual) spaces are $V^*=H^{-2}(\Omega)$ and $N^*=N$, respectively.

The functional $J$ can be represented in the form
\begin{equation}
 \label{functional}
J(v)=G(\Lambda v)+F(v),
 \end{equation}
where the operator $\Lambda$ and  functionals $G$ and $F$ are defined as follows:
\begin{gather*}
 \Lambda: V \rightarrow N, \quad \Lambda:=\nabla \nabla; \qquad \quad
 G: N \rightarrow \mathbb{R}, \quad
G(n):=\frac{1}{2} \int\limits_{\Omega} |n|^2dx; \\
F: V \rightarrow \mathbb{R}, \quad F(v):=-\int\limits_{\Omega} fv dx+\chi_{\mathbb K} (v), \qquad \chi_{\mathbb K} (v)=\left\{\begin{aligned}
&0, &&v\in \mathbb{K},\\
&+\infty,  &&v\notin \mathbb{K}.
\end{aligned}
\right.
\end{gather*}
Hence we can use the general theory presented in \cite{R03a} and in the book \cite{NeRe04}.
Further, we denote by $p^*$  the exact solution of the dual variational problem, which is to maximize the functional  (cf. \cite{ET76})
\begin{equation}
I^*(n^*):=-G^*(n^*)-F^*(-\Lambda^* n^*)
\label{dual_problem}
\end{equation}
over the space $N^*$. 
Here, the operator $\Lambda^*$ is defined as follows:
$$
\Lambda^*:N \rightarrow V^*, \qquad \Lambda^*:= \dive\Dive,
$$
while
$G^*:N^* \rightarrow \mathbb{R}$ and $F^*: V^*=H^{-2}(\Omega) \rightarrow \mathbb{R}$ are the Young-Fenchel transforms of $G$ and $F$, respectively. 


In view of the duality relation $J(u)=I^*(p^*)$ and the identities (7.2.13)-(7.2.14) from \cite{NeRe04} we have for an arbitrary $v\in \mathbb{K}$ and $n^* \in N^*$   the following relations:
\begin{align}
J(v)-J(u)&=J(v)-I^*(p^*)=\mathcal{D}_G(\Lambda v, p^*)+\mathcal{D}_F(v, -\Lambda^*p^*),
\label{7-2-13a} \\
I^*(p^*)-I^*(n^*)&=J(u)-I^*(n^*)=\mathcal{D}_G(\Lambda u, n^*)+\mathcal{D}_F(u,-\Lambda^*n^*).
\label{7-2-13b}
\end{align}
Here $\mathcal{D}_G$ and $\mathcal{D}_F$ denote the so-called compound functionals which are determined by the relations
\begin{eqnarray}
&&{\mathcal D}_G(\Lambda v,n^*)=G(\Lambda v)+G^*(n^*)-(\Lambda v,n^*),\\
&&{\mathcal D}_F(v,-\Lambda^*n^*)=F(v)+F^*(n^*)+<\Lambda n^*,v>,
\end{eqnarray}
where $<v^*,v>$ stands for a linear functional coupling the elements $v\in V$ and $v^*\in V^*$. It follows from the definition of a conjugate functional that the compound functional is always non-negative.

Moreover, relations (\ref{7-2-13a})-(\ref{7-2-13b}) directly imply (see \cite{R03a} for more details) for any $v\in \mathbb{K}$ and $n^*\in N^*$ the validity of  the error identity
\begin{multline} \label{222_GEN}
\mathcal{D}_G(\Lambda v, p^*)+\mathcal{D}_F(v, -\Lambda^*p^*)+
\mathcal{D}_G(\Lambda u, n^*)+\mathcal{D}_F(u,-\Lambda^*n^*)\\
=\mathcal{D}_G(\Lambda v, n^*)+\mathcal{D}_F(v, -\Lambda^*n^*).
\end{multline}
The left-hand side of (\ref{222_GEN}) contains four terms that can be considered as deviation measures of the functions $v$ and $n^*$ from $u$ and $p^*$, respectively. The first two terms can be treated as a measure $\boldsymbol{\mu}(v)$ characterizing the error  of approximation $v$, while another two terms can be regarded as a measure $\boldsymbol{\mu^*}(n^*)$ indicating the error of dual approximation $n^*$. The right-hand side of (\ref{222_GEN}) consists of two terms that do not contain unknown exact solutions $u$ and $p^*$. Therefore, this r.h.s. can be calculated explicitly. Moreover, from  (\ref{7-2-13a}) and  (\ref{7-2-13b}) it follows that the r.h.s. of (\ref{222_GEN}) coincides with the so-called duality gap $J(v)-I^*(p^*)$. Notice that the sequences of approximations $\{v_k\}$, $\{n^*_k\}$, constructed with the help of variational methods, should minimize this gap. Therefore, the identity (\ref{222_GEN}) shows that the measures on the l.h.s. of (\ref{222_GEN}) must tend to zero, if the sequences $\{v_k\}$, $\{n^*_k\}$ are constructed correctly and converge to exact solutions. Hence, the measure on the l.h.s. of the error identity is  an adequate characteristic of the quality of approximations.

Identity  (\ref{222_GEN}) holds for any variational problem with the functional of the form (\ref{functional}). We establish its form in terms of the studied problem. It is easy to see that $G^*: N^* \rightarrow \mathbb{R}$ is defined by the equality $G^*(n^*):=\frac{1}{2} \|n^*\|^2$ (hereinafter, $\|\cdot\|$ denotes the norm in the spaces $L_2(\Omega)$ for scalar, vector, and matrix functions). Therefore,
the first  terms on the right hand sides of (\ref{7-2-13a}) and (\ref{7-2-13b}) are computed easily:
\begin{align}
\mathcal{D}_G(\Lambda v, p^*)&=\frac{1}{2}\|\nabla \nabla v-p^*\|^2=\frac{1}{2}\|\nabla \nabla (v-u)\|^2, \label{D_Gp*}\\
\mathcal{D}_G(\Lambda u, n^*)&=\frac{1}{2}\|\nabla \nabla u-n^*\|^2=\frac{1}{2}\| p^*-n^*\|^2. \label{D_Gn^*}
\end{align}

A computation of the last  summands on the right hand sides of (\ref{7-2-13a}) and (\ref{7-2-13b}) requires more work. 

To compute $\langle v^*, v\rangle$, we need the intermediate Hilbert space $\mathcal{V}:=L^2(\Omega)$. It is clear that  $\mathcal{V}$ satisfies the inclusions
$
V \subset \mathcal{V} \subset V^*$.
If $v^* \in \mathcal{V}$ then the  scalar product $\langle v^*, v\rangle$, is identified with scalar product in the space $\mathcal{V}$, i.e.,
$$
\langle v^*,v\rangle=\int\limits_{\Omega}v^*vdx.
$$
Notice that $v \in L^2(\Omega)$, so the above integral is well-defined for any $v^* \in \mathcal{V}$ and $v\in V$. 

In accordance with the definition of the conjugate functional, for $n^*\in N$ we have
\begin{align*}
F^*(-\Lambda^*n^*)&=\sup\limits_{v\in \mathbb{K}} \left\{\langle -\Lambda^*n^*,v\rangle+(f,v)\right\}=
\sup\limits_{v\in \mathbb{K}} \left\{-(n^*,\Lambda v)+(f,v)\right\}\\
&=\sup\limits_{v\in \mathbb{K}} \bigg\{
\int\limits_{\Omega}(fv-n^*:\nabla\nabla v)dx \bigg\}.
\end{align*}

Observe that  a function $n^*$ in  (\ref{7-2-13b}) is in our disposal. Therefore,
without loss of generality, we may restrict our consideration for symmetric $n^*$ satisfying
\begin{equation}
\label{regularitynz}
n^* \in H(\Omega,\dive\Dive):=\left\{ m^*\in N^*: \dive\Dive m^* \in L^2(\Omega)\right\}.
\end{equation}
Taking into account the condition $\dfrac{\partial v}{\partial \mathbf{n}}\bigg|_{\partial\Omega}=0$ and using integration by parts we conclude that
$$
0=\int\limits_{\partial\Omega}\mathbf{n}(n^*\nabla v)ds=
\int\limits_{\Omega}\Dive{(n^*\nabla v)}dx=
\int\limits_{\Omega}\left(\nabla v \cdot \Dive{n^*}+n^*:\nabla\nabla v\right)dx.
$$
Hence
\begin{equation} \label{by_parts}
\int\limits_{\Omega}\left(fv-n^*:\nabla\nabla v\right)dx=\int\limits_{\Omega}\left(fv+\nabla v
\cdot\Dive{n^*}\right)dx.
\end{equation}
Combining (\ref{by_parts}) with  the formula
\begin{equation} \label{2nd_by_parts}
0=\int\limits_{\partial\Omega}( \Dive n^*\cdot \mathbf{n})vds=
\int\limits_\Omega\dive (v\Dive n^*) dx=\int\limits_\Omega
(\Dive n^*\cdot \nabla v+v \dive \Dive n^*)dx
\end{equation}
we arrive at
$$
\int\limits_\Omega (fv-n^*:\nabla\nabla v)dx=\int\limits_\Omega (f-\dive\Dive n^*)v\,dx.
$$

Thus, for $n^*\in H(\Omega, \dive\Dive)$ we have
\begin{equation} \label{F*(n)}
F^*(-\Lambda^*n^*)=\sup\limits_{v\in \mathbb{K}} \left\{
\int\limits_\Omega (f-\dive\Dive n^*)v\,dx \right\}.
\end{equation}

Let $ \wh v\in \mathbb{K}$ be a given function. Then the function $\wh v + w$ with 
$$
w\in V^+(\Omega):=\{w\in V(\Omega) : w \geqslant 0 \ \text{a.e. in}\ \Omega\},
$$
also belongs to $\mathbb{K}$.
It is easy to see that 
$$
F^*(-\Lambda^*n^*)\geq\int\limits_\Omega (f-\dive\Dive n^*)\wh v dx +
\sup\limits_{w\in  V^+(\Omega)} \left\{
\int\limits_\Omega (f-\dive\Dive n^*)w\,dx \right\}.
$$
Therefore, this expression is finite if and only if $n^* \in Q^*_{\ominus}$, where
\begin{equation}
\label{Qzmin}
Q^*_{\ominus}:=\bigg\{m^*\!\in H(\Omega, \dive\Dive)\!\mid \!\int\limits_{\Omega}
(f-\dive\Dive m^*) w dx \leqslant 0 \, \forall w\in V^+(\Omega)\bigg\}.
\end{equation}
The integral condition in the definition of $Q^*_\ominus$ means that $f-\dive\Dive n^*~\leq~ 0$ almost everywhere in $\Omega$. Indeed, suppose that $n^*\in Q^*_{\ominus}$ and $f-\dive\Dive n^*>0$ on some nonzero measure set $\omega$. Then there exists a ball $B(x_0,\rho)\subset\omega$ where this inequality holds. Consider a compact function $w\in V^+(\Omega)$ having support in this ball. The function $w$ is positive inside the ball, and, consequently,
$$
\int\limits_{\Omega}
(f-\dive\Dive n^*) w dx > 0.
$$
We get a contradiction that proves the validity of the statement.

Let $n^*\in Q^*_{\ominus}$. It is clear that
\begin{equation*}
\sup\limits_{v\in \mathbb{K}} \
\int\limits_\Omega (f-\dive\Dive n^*)v\,dx\leq
\int\limits_\Omega (f-\dive\Dive n^*)\varphi\,dx.
\end{equation*}
Moreover, there exists a sequence of functions $v_k\in {\mathbb K}$ such that $v_k\rightarrow \varphi$ in $L^2(\Omega)$.
We conclude that
\begin{equation} \label{F*n2}
F^*(-\Lambda^*n^*)=
\left\{
\begin{aligned}
&\int\limits_{\Omega}  \varphi(f-\dive\Dive n^*) dx, &&\text{if}\ \, n^*\in Q^*_{\ominus},\\
&+\infty, &&\text{otherwise}.
\end{aligned}
\right.
\end{equation}

Thus, the compound functional $\mathcal{D}_F(u, -\Lambda^*n^*)$ is finite if and only if  the condition
\begin{equation}
f-\dive \Dive  n^* \leqslant 0 
\label{cond-N}
\end{equation}
is satisfied almost everywhere in $\Omega$. 

Therefore, for $n^*\in H(\Omega, \dive\Dive)$ satisfying  (\ref{cond-N}), the compound functional $\mathcal{D}_F(u,-\Lambda^*n^*)$  has the form
\begin{equation} \label{D_Fn*}
\mathcal{D}_F(u,-\Lambda^*n^*)=\int\limits_{\Omega}(f-\dive\Dive n^*)(\varphi-u)dx.
\end{equation}

Our next goal is to compute $\mathcal{D}_F(v,-\Lambda^*p^*)$ for $v\in \mathbb{K}$. 
We can not use the previous formula since $p^*$, in general, does not satisfy the condition (\ref{regularitynz}). Indeed, due to (\ref{reg_u}) we only know that $\dive\Dive p^*$ is a square integrable function on the set $\Omega_0$ (where $u>\varphi$ and the equality $\dive\Dive p^*=f$ holds) and on the coincidence set $\Omega_{\varphi}=\{u=\varphi\}=\Omega\setminus \Omega_0$. However, the square integrability of $\dive\Dive p^*$ does not hold in the whole domain $\Omega$. By this reason, we use a different argument. 

Setting $u=v$ in (\ref{7-2-13a})
we have 
$$
\mathcal{D}_F(u,-\Lambda^*p^*)=0.
$$
This gives
$$
F^*(-\Lambda^*p^*)=-F(u)-\langle \Lambda^*p^*,u\rangle=\int\limits_{\Omega} (fu-p^*:\nabla\nabla u)dx.
$$
Now, using the above relation and arguing in the same way as in deriving (\ref{by_parts}), we conclude that									\begin{equation*} 	
\begin{aligned}													
\mathcal{D}_F (v,-\Lambda^* p^*)&=F(v)+F^*(-\Lambda^* p^*)+
\langle\Lambda^*p^*,v\rangle
=\int\limits_{\Omega}\left(f(u-v)-p^*:\nabla\nabla (u-v)\right)dx\\
&=\int\limits_{\Omega} \left(f(u-v)+\nabla (u-v) \cdot \Dive p^*\right)dx.
\end{aligned}		
\end{equation*}

We have
\begin{equation*}
\int\limits_\Omega (f(u-v)+\Dive p^*\cdot\nabla (u-v))dx=\int\limits_{\{u>\varphi\}} (\dots)dx+
\int\limits_{\{u=\varphi\}} (\dots)dx.
\end{equation*}
Let ${{\boldsymbol \nu}_{\scriptscriptstyle\Gamma_u}}$ denote the exterior unit normal to $\partial\Omega_{\varphi}$ and $e:=u-v$. Since
\be
&&\int\limits_{\{u>\varphi\}} (fe+\Dive p^*\cdot\nabla e)dx=
\underbrace{\int\limits_{\{u>\varphi\}} (f-\dive\Dive p^*)e dx}-\int\limits_{\Gamma_u} (\Dive p^*\cdot {{\boldsymbol \nu}_{\scriptscriptstyle\Gamma_u}}) e ds\\
&&\qquad\qquad\qquad\qquad\qquad\qquad\qquad\qquad \quad   =0
\ee
and
\be
&&\int\limits_{\{u=\varphi\}} (fe+\Dive p^*\cdot\nabla e)dx=
\int\limits_{\{u=\varphi\}} (f-\dive\Dive p^*)e dx+\int\limits_{\Gamma_u} (\Dive p^*\cdot {{\boldsymbol \nu}_{\scriptscriptstyle\Gamma_u}}) e ds,
\ee
we obtain
\begin{equation} \label{D_Fp*}
\mathcal{D}_F(v,-\Lambda^*p^*)=\int\limits_{\Gamma_u} \bigg[\Dive p^*\cdot {{\boldsymbol \nu}_{\scriptscriptstyle\Gamma_u}}\bigg] (u-v) ds+
\int\limits_{\{u=\varphi\}} (f-\dive\Dive p^*)(u-v)dx,
\end{equation}
where $\big[\Upsilon\big]=\Upsilon_{\Gamma_u}(\Omega_{\varphi})-\Upsilon_{\Gamma_u}(\Omega_0)$ 
denote the jump of $\Upsilon:=\Dive p^*\cdot {{\boldsymbol \nu}_{\scriptscriptstyle\Gamma_u}}$ across the free boundary $\Gamma_u:=\partial\Omega_{\varphi}$.
Thus, we have to take into consideration an additional integral term arising on the free boundary $\Gamma_u$.
\bigskip

Combining (\ref{7-2-13a})-(\ref{D_Gn^*}) with (\ref{D_Fn*})-(\ref{D_Fp*}), 
we get explicit expressions for the measures in the left-hand side of (\ref{222_GEN}). For any function $v\in \mathbb{K}$ we have
\begin{equation}
\boldsymbol{\mu} (v):=\mathcal{D}_G(\Lambda v, p^*)+\mathcal{D}_F(v, -\Lambda^*p^*)=\frac{1}{2}\|\nabla \nabla (u-v)\|^2+\mu_{\varphi }(v),
\label{error2}
\end{equation}
where 
\begin{equation}
\mu_{\varphi }(v):=\int\limits_{\{u=\varphi \}} (\dive\Dive \nabla \nabla u -f)(v-u)dx 
-\int\limits_{\Gamma_u} \bigg[\Dive \nabla\nabla u\cdot {\boldsymbol \nu}_{\Gamma_u}\bigg] (v-u)ds.
\label{measure1}
\end{equation}
The first term in (\ref{error2}) controls the deviation 
from $u$ in the  $H^2$--norm. The second term $\mu_{\varphi }(v)$ defined by (\ref{measure1}) can be viewed as an additional (nonlinear) measure of  $v-u$. This term is nonegative and vanishes if $v=u$.
Indeed, it is well-known that the problem ($\mathcal{P}$) is equivalent to the biharmonic variational inequality: find $u\in \mathbb{K}$ such that
\begin{equation} \label{var_ineq}
\int\limits_{\Omega_0\cup\Omega_{\varphi}}\left\{\nabla\nabla u:\nabla\nabla (v-u) -f(v-u)\right\}dx \geqslant 0 \qquad \forall v\in \mathbb{K}.
\end{equation}
Applying integration by parts two times and arguing in the same manner as in deriving (\ref{D_Fp*}),  we transform the inequality (\ref{var_ineq}) to the form
$$
\mu_{\varphi}(v) \geqslant 0 \qquad 
\forall v\in \mathbb{K}.
$$
Notice that the first term in $\mu_{\varphi }(v)$ is quite analogous to that in the classical obstacle
problem ((see \cite{RV18}). The expression $f-\dive\Dive p^*$ plays the role of a "weight" function which is negative,
so that the whole integral is zero or positive. 
The second term in $\mu_{\varphi }(v)$ is of a new type. It also serves as a penalty
in the line integral. Notice that if $v=u$ on $\Gamma_u$, then it wanishes.

It is easy to see that $\mu_{\varphi}(v)=0$ if $\Omega_\varphi \subset \{x\in \Omega\,\mid\,v(x)=\varphi(x)\}$. In other cases, this measure will be positive.
Thus, the measure $\mu_{\varphi}(v)$ controls (in a
weak integral sense) how accurately the set $\{v=\varphi\}$ approximate the exact coincidence set $\Omega_{\varphi}$. 
Obviously, this component is very week and does not provide the desired information about the free boundary $\Gamma_u$. We emphasize that it is impossible to get more information about the free boundary in the framework of the standard variational approach. Indeed, in view of the equality $\boldsymbol{\mu} (v)=J(v)-J(u)$, the measure $\boldsymbol{\mu} (v)$ tends to zero at all minimizing sequences. Moreover, this measure is the strongest among all measures that possess such a property.

Similarly, we see that if $p^*$ is the maximizer of the dual variational problem (\ref{dual_problem}) and $n^* \in H(\Omega, \dive\Dive )$ is its approximation satisfying the condition (\ref{cond-N}), then the corresponding deviation measure has the form
\begin{equation}
\boldsymbol{\mu}^*(n^*):=
\mathcal{D}_G(\Lambda u, n^*)+\mathcal{D}_F(u,-\Lambda^*n^*)=\frac{1}{2}\|p^*-n^*\|^2+
\mu^*_{\varphi }(n^*),
\label{error3} 
\end{equation}
where
\begin{equation}
\mu^*_{\varphi }(n^*):=\int\limits_{\Omega_0} (f-\dive\Dive n^*)(\varphi-u)dx.
\label{measure2}
\end{equation}
The integral in (\ref{measure2}) is non-negative. It can be considered 
as a measure penalizing (in weak integral sense) an incorrect behavior of the dual variable  on the set $\Omega_0$ where $n^*$ must satisfy the differential equation. Finally, we note that $\boldsymbol{\mu}^*(n^*)=I^*(p)-I^*(n^*)$. Therefore, $(n^*_k)$ is a maximizing sequence in the dual problem if and only if this error measure tends to zero. 
Regrettably, both  measures $\mu_{\varphi }$ and $\mu^*_{\varphi }$  are too weak to estimate how accurately the free boundary $\Gamma_u$ is reproduced by the approximate solution. This fact shows  limitations of direct variational methods in reconstruction of free boundaries (see also  \cite{RV18}).
\medskip

The equalities  (\ref{222_GEN}), (\ref{F*(n)}), (\ref{D_Fn*}), (\ref{error2}),  and  (\ref{error3}) imply the following result:
\begin{thm}\label{erroridentity}
For a function  
$n^*\in H(\Omega,\dive\Dive)$ satisfying the condition  (\ref{cond-N}) and a function $v\in \mathbb{K}$  the identity
\begin{equation}
\boldsymbol{\mu}(v)+\boldsymbol{\mu}^*(n^*)=\frac{1}{2}\|\nabla \nabla v-n^*\|^2+\int\limits_{\Omega}(f-\dive\Dive n^*)(\varphi -v)dx,
\label{7-2-14b}
\end{equation}
holds. The left-hand side of (\ref{7-2-14b}) is a measure of the deviation of  $v$ from $u$ and of $n^*$  from $p^*$, while the right-hand side of the above identity is a fully computable expression.
\end{thm}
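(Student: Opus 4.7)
The plan is to read off the statement as a direct specialization of the abstract error identity (\ref{222_GEN}). By the definitions (\ref{error2}) and (\ref{error3}), the left-hand side of (\ref{7-2-14b}) coincides exactly with the left-hand side of (\ref{222_GEN}):
\begin{equation*}
\boldsymbol{\mu}(v)+\boldsymbol{\mu}^*(n^*)
=\mathcal{D}_G(\Lambda v,p^*)+\mathcal{D}_F(v,-\Lambda^*p^*)
+\mathcal{D}_G(\Lambda u,n^*)+\mathcal{D}_F(u,-\Lambda^*n^*).
\end{equation*}
Hence everything reduces to identifying the right-hand side of (\ref{222_GEN}), namely $\mathcal{D}_G(\Lambda v,n^*)+\mathcal{D}_F(v,-\Lambda^* n^*)$, with the computable expression in (\ref{7-2-14b}). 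This will be done term by term, reusing exactly the machinery already developed earlier in this section.

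For the first compound functional, I would recall that $G(n)=\tfrac{1}{2}\|n\|^2$ implies $G^*(n^*)=\tfrac{1}{2}\|n^*\|^2$, so completing the square as in (\ref{D_Gp*}) immediately yields $\mathcal{D}_G(\Lambda v,n^*)=\tfrac{1}{2}\|\nabla\nabla v-n^*\|^2$, with no additional hypotheses on $n^*$ required. This produces the first summand on the right of (\ref{7-2-14b}).

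The second compound functional is where the assumptions enter. Since $n^*\in H(\Omega,\dive\Dive)$ and (\ref{cond-N}) holds, the argument leading to (\ref{F*n2}) applies and gives $F^*(-\Lambda^* n^*)=\int_\Omega\varphi(f-\dive\Dive n^*)\,dx$. Next, using $F(v)=-\int_\Omega fv\,dx$ for $v\in\mathbb{K}$ and applying the two successive integrations by parts (\ref{by_parts})--(\ref{2nd_by_parts}) (which are legitimate precisely because $v$ and $\partial v/\partial\mathbf{n}$ vanish on $\partial\Omega$ and $\dive\Dive n^*\in L^2(\Omega)$), I would rewrite
\begin{equation*}
\langle\Lambda^* n^*,v\rangle=\int_\Omega n^*:\nabla\nabla v\,dx=\int_\Omega v\,\dive\Dive n^*\,dx.
\end{equation*}
Collecting the three pieces of $\mathcal{D}_F(v,-\Lambda^* n^*)=F(v)+F^*(-\Lambda^* n^*)+\langle\Lambda^* n^*,v\rangle$ and grouping the factor $(f-\dive\Dive n^*)$ yields $\int_\Omega(f-\dive\Dive n^*)(\varphi-v)\,dx$, which is the second summand on the right of (\ref{7-2-14b}). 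Adding the two contributions concludes the proof.

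There is essentially no hard step: the nontrivial work has already been carried out when deriving (\ref{F*n2}) and the integration-by-parts formulas, and the only thing the hypotheses $n^*\in H(\Omega,\dive\Dive)$ and (\ref{cond-N}) do is guarantee both the finiteness of $F^*(-\Lambda^* n^*)$ and the admissibility of the double integration by parts. The proof is therefore a bookkeeping assembly of previously established formulas, and the statement follows at once from (\ref{222_GEN}).
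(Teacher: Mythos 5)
Your proposal is correct and follows exactly the route the paper takes: the paper proves Theorem~\ref{erroridentity} by combining the general identity (\ref{222_GEN}) with the explicit formulas (\ref{F*(n)}), (\ref{D_Fn*}), (\ref{error2}), and (\ref{error3}), which is precisely your term-by-term identification of $\mathcal{D}_G(\Lambda v,n^*)$ with $\tfrac12\|\nabla\nabla v-n^*\|^2$ and of $\mathcal{D}_F(v,-\Lambda^*n^*)$ with $\int_\Omega(f-\dive\Dive n^*)(\varphi-v)\,dx$ via (\ref{F*n2}) and the double integration by parts. You also correctly locate where the hypotheses $n^*\in H(\Omega,\dive\Dive)$ and (\ref{cond-N}) are actually used, so nothing is missing.
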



 \subsection{Extension of the admissible set for $n^*$}
 
 Equality (\ref{7-2-14b})  provides a simple and transparent form of the error identity, but it operates with the
functions $n^* \in H(\Omega, \dive\Dive)$ satisfying the condition (\ref{cond-N}).   This functional set   is rather narrow and
 inconvenient if we wish to use in practise. In this subsection, we overcome this drawback and extend the admissible set for $n^*$.


\begin{lemma} \label{thin-L2-3}
For any function $\widetilde{n}^* \in H(\Omega, \dive\Dive )$ the projection inequality
\begin{equation} \label{difference_for_n}
\inf\limits_{n^*\in Q^*_{\ominus}}\|n^*-\widetilde{n}^*\| \leqslant 
C_{F_{\Omega}}\|( f-\dive\Dive \widetilde{n}^*)_{\oplus}\|
\end{equation}
holds. Here $C_{F_{\Omega}}$ is the constant defined by (\ref{F-constant}), and the set $Q^*_{\ominus}$ is determined in (\ref{Qzmin}).
\end{lemma}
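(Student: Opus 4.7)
The strategy is a ``constraint repair'' construction: given $\widetilde{n}^*$, I will perturb it by the Hessian of a suitable auxiliary function so that the constraint defining $Q^*_\ominus$ is restored, while controlling the $L^2$-cost of the perturbation by the Friedrichs/Poincar\'e-type inequality that (presumably) defines $C_{F_\Omega}$.

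Concretely, set $g:=(f-\dive\Dive\widetilde{n}^*)_\oplus\in L^2(\Omega)$, which measures pointwise how badly $\widetilde{n}^*$ violates the condition (\ref{cond-N}). I would then solve the auxiliary biharmonic Dirichlet problem: find $\psi\in V$ such that $\Delta^2\psi=g$ in $\Omega$ with $\psi=\partial_{\mathbf{n}}\psi=0$ on $\partial\Omega$. Existence and uniqueness of $\psi\in V$ follow from Lax--Milgram applied to $(\Delta\psi,\Delta\phi)=(g,\phi)$ for all $\phi\in V$. Define the corrected tensor
\begin{equation*}
n^*:=\widetilde{n}^*+\nabla\nabla\psi.
\end{equation*}
Since $\dive\Dive(\nabla\nabla\psi)=\Delta^2\psi=g$ in the distributional sense (and $g\in L^2$), the sum satisfies $\dive\Dive n^*=\dive\Dive\widetilde{n}^*+g\in L^2(\Omega)$, so $n^*\in H(\Omega,\dive\Dive)$. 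By construction $f-\dive\Dive n^*=f-\dive\Dive\widetilde{n}^*-g\leqslant 0$ almost everywhere (this is the defining property of the positive part), so $n^*\in Q^*_\ominus$.

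It remains to bound $\|n^*-\widetilde{n}^*\|=\|\nabla\nabla\psi\|$. Testing the equation $\Delta^2\psi=g$ against $\psi$ and integrating by parts twice, using $\psi=\partial_{\mathbf{n}}\psi=0$ on $\partial\Omega$, gives $\|\Delta\psi\|^2=(g,\psi)$. Since $\psi\in V$ vanishes together with its full gradient on $\partial\Omega$, another double integration by parts yields the identity $\|\nabla\nabla\psi\|=\|\Delta\psi\|$, hence $\|\nabla\nabla\psi\|^2=(g,\psi)\leqslant\|g\|\,\|\psi\|$. Applying the Friedrichs-type inequality $\|\psi\|\leqslant C_{F_\Omega}\|\nabla\nabla\psi\|$ (the content of (\ref{F-constant})) and dividing, I obtain $\|\nabla\nabla\psi\|\leqslant C_{F_\Omega}\|g\|$, which is precisely (\ref{difference_for_n}) after taking the infimum over $Q^*_\ominus$.

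The only potentially delicate step is the identity $\|\nabla\nabla\psi\|=\|\Delta\psi\|$ on $V$; I would carry out the integration by parts carefully, noting that $\psi=0$ on $\partial\Omega$ together with $\partial_{\mathbf{n}}\psi=0$ implies $\nabla\psi\equiv 0$ on $\partial\Omega$, which kills all boundary contributions. Everything else is a routine verification, and the argument produces an explicit ``repair'' $n^*$, not merely an existence proof for the infimum.
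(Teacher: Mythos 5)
Your proof is correct, but it takes a genuinely different route from the paper. The paper proceeds by duality: it rewrites $\inf_{n^*\in Q^*_\ominus}\tfrac12\|n^*-\widetilde{n}^*\|^2$ as an inf--sup over $n^*\in N^*$ and $w\in V^+(\Omega)$, invokes a minimax theorem (citing Ekeland--Temam) to swap the order, solves the inner minimization to get $n^*=\widetilde{n}^*+\nabla\nabla w$, and then bounds the resulting concave maximization $\sup_{w\in V^+}\int_\Omega\bigl(-\tfrac12|\nabla\nabla w|^2+(f-\dive\Dive\widetilde{n}^*)_\oplus w\bigr)dx$ by the Friedrichs inequality and a scalar optimization in $t=\|\nabla\nabla w\|$. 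You instead exhibit an explicit admissible competitor: solve one clamped biharmonic problem $\Delta^2\psi=(f-\dive\Dive\widetilde{n}^*)_\oplus$ and take $n^*=\widetilde{n}^*+\nabla\nabla\psi$, which repairs the sign constraint exactly because $a-a_\oplus\leqslant 0$, and whose cost $\|\nabla\nabla\psi\|$ is controlled by testing the equation with $\psi$ and using $\|\nabla\nabla\psi\|=\|\Delta\psi\|$ on $H^2_0$. Your argument is more elementary and constructive --- it avoids the minimax permutation (whose hypotheses the paper must verify) and never needs the sign constraint $w\geqslant 0$ on the test function --- and it produces the corrector explicitly rather than only bounding the infimum; the paper's approach, in exchange, characterizes the projection distance exactly before estimating it, which is in the spirit of the duality framework used throughout. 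The two are closely related: your $\psi$ is precisely the maximizer of the paper's dual functional once the positive part has been inserted.

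One small caveat on constants: you invoke $\|\psi\|\leqslant C_{F_\Omega}\|\nabla\nabla\psi\|$, whereas the displayed chain (\ref{F-constant}) literally gives $\|\psi\|\leqslant C^2_{F_\Omega}\|\nabla\nabla\psi\|$, and the paper's own proof correspondingly carries $C^2_{F_\Omega}$ through (\ref{estimate_w}). This discrepancy is already present in the paper itself (the numerical section takes $C_{F_\Omega}=4/\pi^2$, i.e.\ the second-order constant), so your reading is the one consistent with the statement of (\ref{difference_for_n}); just make explicit which constant you mean.
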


\begin{proof}
For any function $m^*\in H(\Omega, \dive\Dive )$ the equality
\begin{multline*} 
\sup\limits_{w\in V^+(\Omega)}\!\int\limits_{\Omega}\!\left(\!\frac{1}{2}|m^*\!-\!\widetilde{n}^*|^2+fw-m^* \!: \nabla \nabla w \right)dx\\
 =\!\frac12\|m^*\!-\!\widetilde{n}^*\|^2+\!\!
\sup\limits_{w\in V^+(\Omega)}\!\int\limits_{\Omega}\!(f\!-\!\dive\Dive m^*)  w dx\\
=
\left\{
\begin{array}{cc}
\!\!\!\frac12\|m^*-\widetilde{n}^*\|^2,\,&\text{\rm if}\,m^*\in Q^*_\ominus,\\
+\infty, \,&\text{\rm if}\,m^*\not\in Q^*_\ominus.
\end{array}
\right.
\end{multline*}
holds. Therefore
\begin{equation} \label{inf_sup}
\inf\limits_{n^*\in Q^*_{\ominus}}\frac{1}{2}\|n^*-\widetilde{n}^*\|^2 =
\inf\limits_{n^* \in N^*} \sup\limits_{w\in H^{2,+}_0(\Omega)}\int\limits_{\Omega}\left(\frac{1}{2}|n^*-\widetilde{n}^*|^2+fw-n^* : \nabla \nabla w \right)dx.
\end{equation}
The Lagrangian defining the minimax formulation (\ref{inf_sup}) is linear w.r.t. $w$ and convex w.r.t. $n^*$. For $w=0$ it is coercive w.r.t. the dual variable. The space $N^*$ is a Hilbert one, and $V^+(\Omega)$ is a convex closed subset of the reflexive space $V$. Using the well-known
sufficient conditions providing the possibility of permutation $\inf$ and sup (see, for example, \cite {ET76}, \S 2 of chapter IV), we can rewrite (\ref {inf_sup}) in the form
\begin{equation*}
\inf\limits_{n^*\in Q^*_{\ominus}}\frac{1}{2}\|n^*-\widetilde{n}^*\|^2 
=\sup\limits_{w\in V^+(\Omega)} \inf\limits_{n^* \in N^*} 
\int\limits_{\Omega}\left(\frac{1}{2}|n^*-\widetilde{n}^*|^2+fw-n^* : \nabla \nabla w \right)dx.
\end{equation*}

Examination of the infimum w.r.t. $n^* \in N^*$ is reduced to an algebraic problem whose solution satisfies the equation $n^*=\widetilde{n}^*+\nabla \nabla w$ almost everywhere in $\Omega$. Using this equation and integrating by parts we obtain
\begin{equation} \label{kappa}
\begin{aligned}
\inf\limits_{n^*\in Q^*_{\ominus}}\frac{1}{2}\|n^*-\widetilde{n}^*\|^2 
&=\sup\limits_{w\in V^+(\Omega)}
\int\limits_{\Omega}
\left(-\frac{1}{2}|\nabla\nabla w|^2+fw-\widetilde{n}^* : \nabla \nabla w \right)dx\\
&=\sup\limits_{w\in V^+(\Omega)}
\int\limits_{\Omega} 
\left(-\frac{1}{2}|\nabla\nabla w|^2 +(f-\dive\Dive \widetilde{n}^*)w\right)dx\\
&\leqslant \sup\limits_{w\in V^+(\Omega)}
\int\limits_{\Omega} 
\left(-\frac{1}{2}|\nabla\nabla w|^2 +(f-\dive\Dive \widetilde{n}^*)_{\oplus}w\right)dx.
\end{aligned}
\end{equation}
Successive application of the Friedrich's type inequality
\begin{equation} \label{F-constant}
\|w\|\leqslant C_{F_{\Omega}}\|\nabla w\|\leqslant C^2_{F_{\Omega}}\|\nabla\nabla w\|
\end{equation}
allows us to estimate the last integral as follows
\begin{equation} \label{estimate_w}
\int\limits_{\Omega} (\dive\Dive \widetilde{n}^*+f)_{\oplus}wdx \leqslant C^2_{F_{\Omega}} 
\|(f-\dive\Dive \widetilde{n}^*)_{\oplus}\|\|\nabla \nabla w\|.
\end{equation}

Denoting $t:=\|\nabla \nabla w\|$ and combining (\ref{kappa}) with (\ref{estimate_w}), we see that supremum in (\ref{kappa}) can be estimated from above by the quantity
$$
 \sup\limits_{t \geqslant 0} \left(
-\frac{1}{2}t^2+C^2_{F_{\Omega}}  \|(f-\dive\Dive \widetilde{n}^*)_{\oplus}\| t\right)=\frac{1}{2}C^2_{F_{\Omega}}  \|(f-\dive\Dive \widetilde{n}^*)_{\oplus}\|^2 
$$
and complete the proof.
\end{proof}

\subsection{Error majorant}

Now we use Lemma~\ref{thin-L2-3} and identity (\ref{7-2-14b}) to obtain an estimate that holds for $\wt n^*\in H(\Omega, \dive\Dive )$. First of all, we have to transform the expressions for measure $\boldsymbol{\mu}^*(n^*)$ defined by the formula (\ref{error3}). Using the Young inequality (with the parameter $\beta$) we get 
the following lower bound for 
 $\boldsymbol{\mu}^*(n^*)$:
\begin{equation}\label{fm-1}
\begin{aligned}
 \boldsymbol{\mu}^*(n^*)&=
 \frac{1}{2}\|p^*-n^*+\widetilde{n}^*-\widetilde{n}^*\|^2+\int_{\Omega \setminus \{u=\varphi \}}(f-\dive\Dive (n^*-\widetilde{n}^*+\widetilde{n}^*))(\varphi -u)dx \\
 &\geqslant \boldsymbol{\mu}^*_{\beta}(\widetilde{n}^*)
 +\frac{1}{2}
 \left(1-\frac{1}{\beta}\right)\|\widetilde{n}^*-n^*\|^2-
 \int\limits_{\Omega \setminus \{u=\varphi\}} (\varphi-u)\dive\Dive(n^*-\widetilde{n}^*)dx,
 \end{aligned}
\end{equation}
where 
$$\boldsymbol{\mu}^*_{\beta}(\widetilde{n}^*)=\dfrac{(1-\beta)}{2}\,\|p^*-\widetilde{n}^*\|^2+\mu^*_{\varphi}(\widetilde{n}^*).
$$
We turn to (\ref{7-2-14b}). For the right-hand side
we obtain the upper bound
\begin{multline}
\label{fm-2}
\boldsymbol{\mu}(v)+\boldsymbol{\mu}^*(n^*) =\frac{1}{2}\|\nabla \nabla v -n^*\|^2\\
+\int\limits_{\Omega}(f-\dive\Dive \widetilde{n}^*)(\varphi-v)dx
+\int\limits_{\Omega}(f-\dive\Dive (n^*-\widetilde{n}^*))(\varphi-v)dx
\\
 \leqslant {\frac12}(1+\beta)\|\nabla\nabla v-\widetilde{n}^*\|^2+{\frac12}\left(1+\frac{1}{\beta}\right)\|n^*-\widetilde{n}^*\|^2\\
+\int\limits_{\Omega}(f-\dive\Dive \widetilde{n}^*)(\varphi -v)dx-\int\limits_{\Omega}(\varphi -v)
\dive\Dive (n^*-\widetilde{n}^*)dx.
\end{multline}
The bounds (\ref{fm-1}) and (\ref{fm-2}) are valid for any  $\widetilde{n}^*\in H(\Omega, \dive\Dive)$.

Putting together (\ref{fm-1}) and (\ref{fm-2}), shifting  the terms
$$
\frac{1}{2}\left(1-\frac{1}{\beta}\right)\|\widetilde{n}^*-n^*\|^2 \quad \text{and} \quad \int\limits_{\Omega \setminus \{u=\varphi\}}(\varphi -u)\dive\Dive (n^*-\widetilde{n}^*)dx
$$ to the opposite side, and combining the similar terms,  we get
\begin{equation}\label{fm-3}
\begin{aligned}
\boldsymbol{\mu}(v)+ \boldsymbol{\mu}_{\beta}^*(\widetilde{n}^*)& \leqslant \frac{1}{2}(1+\beta)\|\nabla \nabla v-\widetilde{n}^*\|^2+\int\limits_{\Omega} (f-\dive\Dive \widetilde{n}^*)(\varphi -v)dx\\
&+\frac{1}{\beta}\|n^*-\widetilde{n}^*\|^2
+\int\limits_{\Omega}(v-u)\dive\Dive(n^*-\widetilde{n}^*)dx.
\end{aligned}
\end{equation}

Successive application of H{\"o}lder's inequality and Young's inequality (with parameter $\beta$) to the last term on the right hand side of (\ref{fm-3}) yields the inequality
\begin{equation}\label{fm-4}
\int\limits_{\Omega}(v-u)\dive\Dive(n^*-\widetilde{n}^*)dx \leqslant
\frac{\beta}{2}\|\nabla \nabla (u-v)\|^2+\frac{1}{2\beta}\|n^*-\widetilde{n}^*\|^2.
\end{equation}

Relations (\ref{fm-3}), (\ref{fm-4}), and (\ref{difference_for_n}) provide the desired estimate (\ref{2-22}) where the right-hand side contains only known functions and can be computed explicitly.


\begin{thm}\label{first_majorant}
For any $v\in \mathbb{K}$ and  $\widetilde{n}^*\in H(\Omega, \dive\Dive )$, the full error measure is subject to the estimate
\begin{equation} \label{2-22}
\frac{1-\beta}{2}\bigg(\|\nabla\nabla (u-v)\|^2 
+\|p^*-\widetilde{n}^*\|^2\bigg) +\mu_{\varphi}(v)+\mu_{\varphi}^*(\widetilde{n}^*)
\leqslant \mathfrak{M}(v, \widetilde{n}^*,f, \varphi, \beta ),
\end{equation}
where
\begin{align*}
\mathfrak{M}(v, \widetilde{n}^*,f, \varphi, \beta )&:=
\frac{1}{2}(1+\beta)\|\nabla\nabla v-\widetilde{n}^*\|^2+\frac{3}{2\beta}
C^2_{F_{\Omega}}\|(f-\dive\Dive \widetilde{n}^*)_{\oplus}\|^2\\
&+\int\limits_{\Omega}(f-\dive\Dive \widetilde{n}^*)(\varphi-v)dx,
\end{align*}
a  parameter $\beta \in (0,1)$, and $C_{F_{\Omega}}$ is the same constant as in Lemma~\ref{thin-L2-3}.
\end{thm}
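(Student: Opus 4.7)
The plan is to rescue the error identity (\ref{7-2-14b}), which is only valid for $n^*\in Q^*_{\ominus}$, by comparing an arbitrary $\widetilde{n}^*\in H(\Omega,\dive\Dive)$ to its (virtual) projection onto the admissible cone $Q^*_{\ominus}$. All the heavy lifting is done by two ingredients already available: the identity of Theorem~\ref{erroridentity} and the projection estimate of Lemma~\ref{thin-L2-3}. I would not compute the infimum of $\|n^*-\widetilde n^*\|$ explicitly; instead I would carry an auxiliary $n^*\in Q^*_{\ominus}$ through the computation and pass to the infimum at the very end.

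First I would apply (\ref{7-2-14b}) with some $n^*\in Q^*_{\ominus}$ and rewrite every occurrence of $n^*$ as $\widetilde n^*+(n^*-\widetilde n^*)$. On the left-hand side, expanding $\tfrac12\|p^*-n^*\|^2$ and the integral in the definition of $\boldsymbol{\mu}^*(n^*)$ and applying Young's inequality with parameter $\beta\in(0,1)$ yields the lower bound (\ref{fm-1}); the leftover cross term $-\int_{\Omega\setminus\{u=\varphi\}}(\varphi-u)\dive\Dive(n^*-\widetilde n^*)\,dx$ is kept intact for now. On the right-hand side, the same add-and-subtract trick applied to $\tfrac12\|\nabla\nabla v-n^*\|^2$ and to $\int(f-\dive\Dive n^*)(\varphi-v)\,dx$ produces the upper bound (\ref{fm-2}).

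Next I would add the two inequalities, move the $\|n^*-\widetilde n^*\|^2$ pieces and the cross term involving $u$ to one side, and collect the pure $\widetilde n^*$ contributions into $\boldsymbol{\mu}^*_{\beta}(\widetilde n^*)$. This produces (\ref{fm-3}), in which the only remaining terms that still reference the unknown $u$ are $\tfrac1\beta\|n^*-\widetilde n^*\|^2$ and $\int_{\Omega}(v-u)\dive\Dive(n^*-\widetilde n^*)\,dx$. The latter I would estimate by H\"older followed by Young with the same parameter $\beta$, integrating by parts twice so the derivatives fall on $v-u$; this yields (\ref{fm-4}) and, crucially, produces the term $\tfrac{\beta}{2}\|\nabla\nabla(u-v)\|^2$. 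Transposing this term to the left absorbs exactly $\beta/2$ of the $H^2$-deviation, leaving the prefactor $(1-\beta)/2$ that appears in the statement.

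At this point only $\tfrac{3}{2\beta}\|n^*-\widetilde n^*\|^2$ couples the estimate to the unknown projection $n^*$. Taking the infimum over $n^*\in Q^*_{\ominus}$ and invoking Lemma~\ref{thin-L2-3} converts this into $\tfrac{3}{2\beta}C_{F_\Omega}^2\|(f-\dive\Dive\widetilde n^*)_\oplus\|^2$, finishing the derivation of $\mathfrak{M}$. The main obstacle I anticipate is bookkeeping: the cross term involving the unknown $u$ must be split so that (i) the part proportional to $\|\nabla\nabla(u-v)\|^2$ can be absorbed into the left-hand side via Young, and (ii) the part proportional to $\|n^*-\widetilde n^*\|^2$ can be controlled by the projection lemma. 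Balancing the two Young applications so that a single parameter $\beta$ governs both the prefactor $(1-\beta)/2$ on the left and the coefficient $3/(2\beta)$ on the right is the delicate point; everything else is routine convex-analysis manipulation of the identity (\ref{7-2-14b}).
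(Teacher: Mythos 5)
Your proposal is correct and follows essentially the same route as the paper: the add-and-subtract decomposition $n^*=\widetilde n^*+(n^*-\widetilde n^*)$ in the identity (\ref{7-2-14b}), the two Young-inequality bounds (\ref{fm-1})--(\ref{fm-2}), the absorption of $\tfrac{\beta}{2}\|\nabla\nabla(u-v)\|^2$ into the left-hand side via (\ref{fm-4}), and the final passage to the infimum over $Q^*_{\ominus}$ handled by Lemma~\ref{thin-L2-3} are exactly the paper's steps, including the bookkeeping that yields the coefficients $(1-\beta)/2$ and $3/(2\beta)$.
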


\begin{remark}
In (\ref{fm-1}) - (\ref{fm-4}), we used Young's inequality with the same constant $\beta$. In general, the constants can be taken different. Then, after an optimization
(with respect to the constants) we get a more accurate (but also more cumbersome) expression for the majorant $\mathfrak{M}$, which we do not list here.
\end{remark}

\section{Numerical examples}

In this section, we consider two examples that demonstrate how the identity (\ref{7-2-14b}) and the estimate (\ref{2-22})  work in practice.

First, we consider a model 1D problem, where the exact solution is known and, therefore, we can explicitly compute approximation errors associated with the primal and dual variables. 
In this example, the
approximate solution has essentially smaller coincidence set than the exact one. Nevertheless the error identity holds and error estimates computed for a regularized dual approximation are quite sharp.

Another example is motivated by an obstacle problem with a plane obstacle for radially symmetric plate which is fixed on the boundary. The obtained results are similar to those received in the 1D model problem and illustrate the validity of the error identity (\ref{7-2-14b}).

Certainly it will be interesting to apply these estimates for those cases, where approximations are constructed by some standard (e.g. FEM) approximations of the biharmonic obstacle problem. However, this question is beyond the framework of the present paper. We plan to devote a special paper to a detailed consideration of this question.

\subsection{Model 1D problem}

Let $\Omega=(-1,1)$, let  $\varphi \equiv -1$, and let  $f\equiv c$. For $c=-1152$\,  the minimizer of the problem (\ref{int}) has the form
$$
u(x)=\left\{
\begin{aligned}
&-8(x+1)^2(6x^2+4x+1), &&\text{\rm if}\quad  -1\leqslant x < -0.5,\\
&-1, &&\text{\rm if}\quad x\in \Omega_{\varphi}:=[-0.5, 0.5],\\
&-8(x-1)^2(6x^2-4x+1), && \text{\rm if}\qquad\,  0.5< x\leqslant 1.
\end{aligned}
\right.
$$
This function satisfies the boundary conditions $u(\pm 1)=u'(\pm 1)=0$ and the equation $u^{IV}+c=0$ in $\Omega_0=(-1,-0.5) \cup (0.5,1)$.
Notice also that 
$$
p^*= u''(x)=\left\{
\begin{aligned}
&-48(2x+1)(6x+5), &&\text{\rm if}\quad -1< x<-0.5,\\
&0, &&\text{\rm if}\qquad  x \in \Omega_{\varphi},\\
&-48(2x-1)(6x-5), &&\text{\rm if}\qquad 0.5<x < 1.
\end{aligned}
\right.
$$
The flux $p^*$ does not satisfy  (\ref{regularitynz}) which in this case reduces to $(n^*)^{\prime\prime} \in L^2(-1,1)$. Function $p^*$ and its derivative are shown in Fig.~\ref{Div_p_bild}. It is easy to see that   $\dive\Dive p^* \notin L^2(-1,1)$.

 \vskip-0.2cm
\begin{figure}[htbp]
\centering
\includegraphics[width=0.44\textwidth]{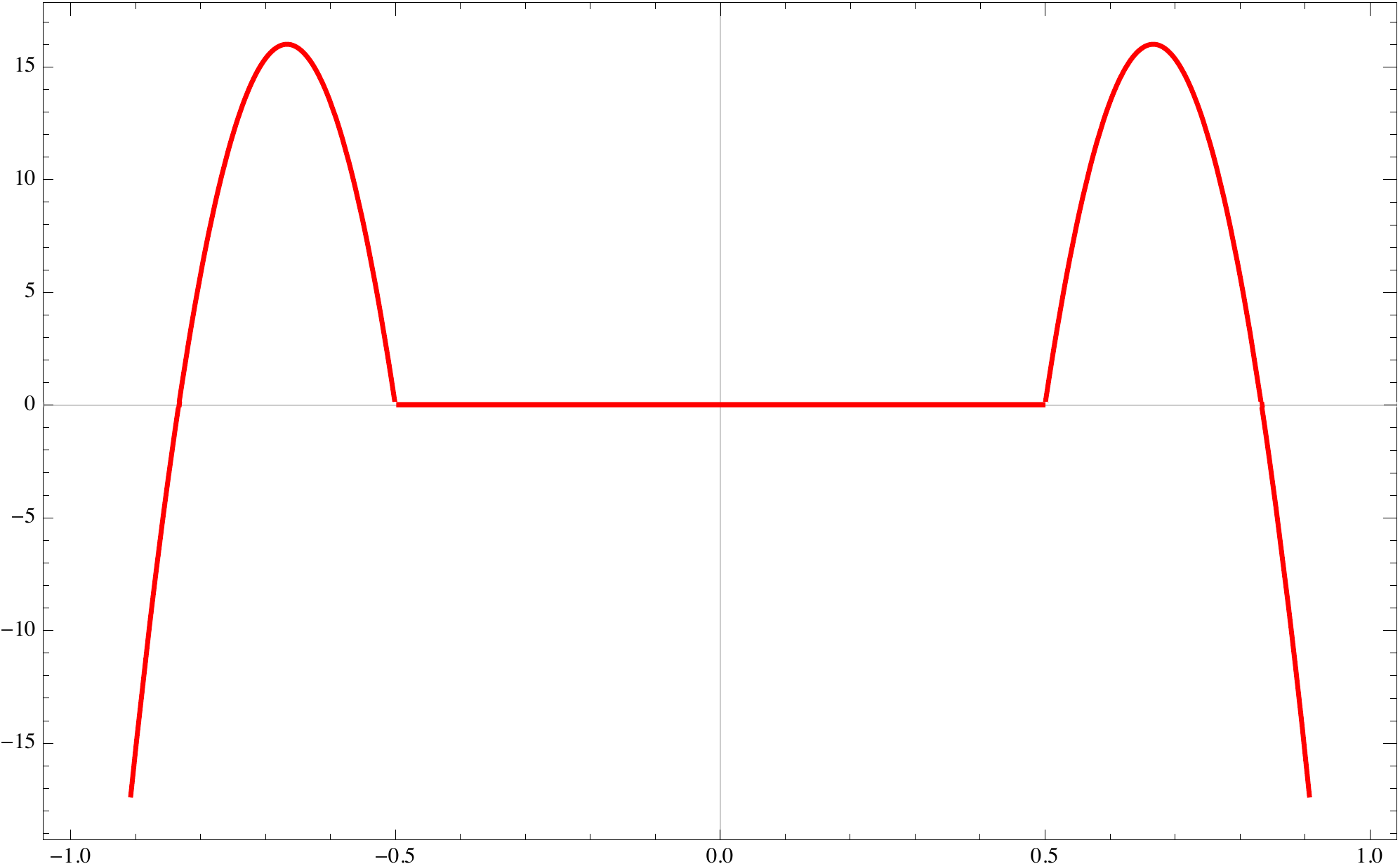}\qquad
\includegraphics[width=0.45\textwidth]{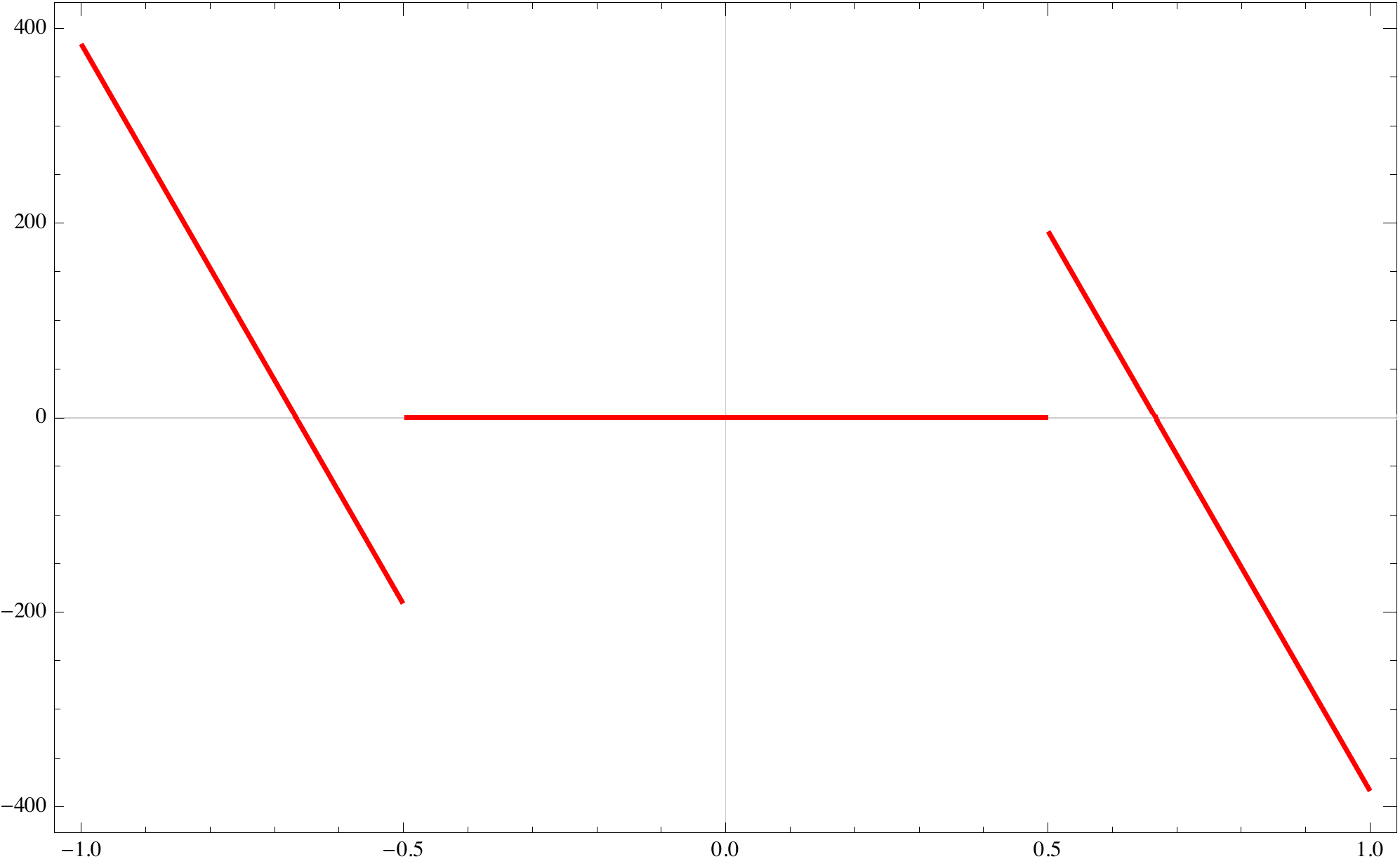}
\caption{The exact flux $p^*$ (left) and 
$(p^*)'$ (right)}
\label{Div_p_bild}
\end{figure}

\vspace{0.5cm}
Consider the function 
$$
v_1(x)=\left\{
\begin{aligned}
&-\frac{16}{27}(x+1)^2(1-8x), &&\text{\rm if}\quad -1 < x<-0.25,\\
&-1, &&\text{\rm if}\quad -0.25 \leqslant x\leqslant 0.25,\\
&-\frac{16}{27}(x-1)^2(1+8x), &&\text{\rm if}\qquad\, 0.25<x < 1.
\end{aligned}
\right.
$$
Obviously, $v_1 \in \mathbb{K}$ and $\{x\in \Omega \mid v_1(x)=-1\} \subset  \Omega_{\varphi}$ (see Fig.~\ref{Exact-Solution-bild22}).

\vspace{0.2cm}
As an approximation of the flux, we first consider the function
$$
n^*(x)=\left\{
\begin{aligned}
&20(2x+1)^2(5+6x), &&\text{\rm if}\quad -1 < x<-0.5,\\
&0, &&\text{\rm if}\quad -0.5 \leqslant x\leqslant 0.5,\\
&20(2x-1)^2(5-6x), &&\text{\rm if}\qquad\, 0.5<x < 1.
\end{aligned}
\right.
$$
 \vskip-0.2cm
\begin{figure}[htbp]
\centering
\includegraphics[scale=0.8]{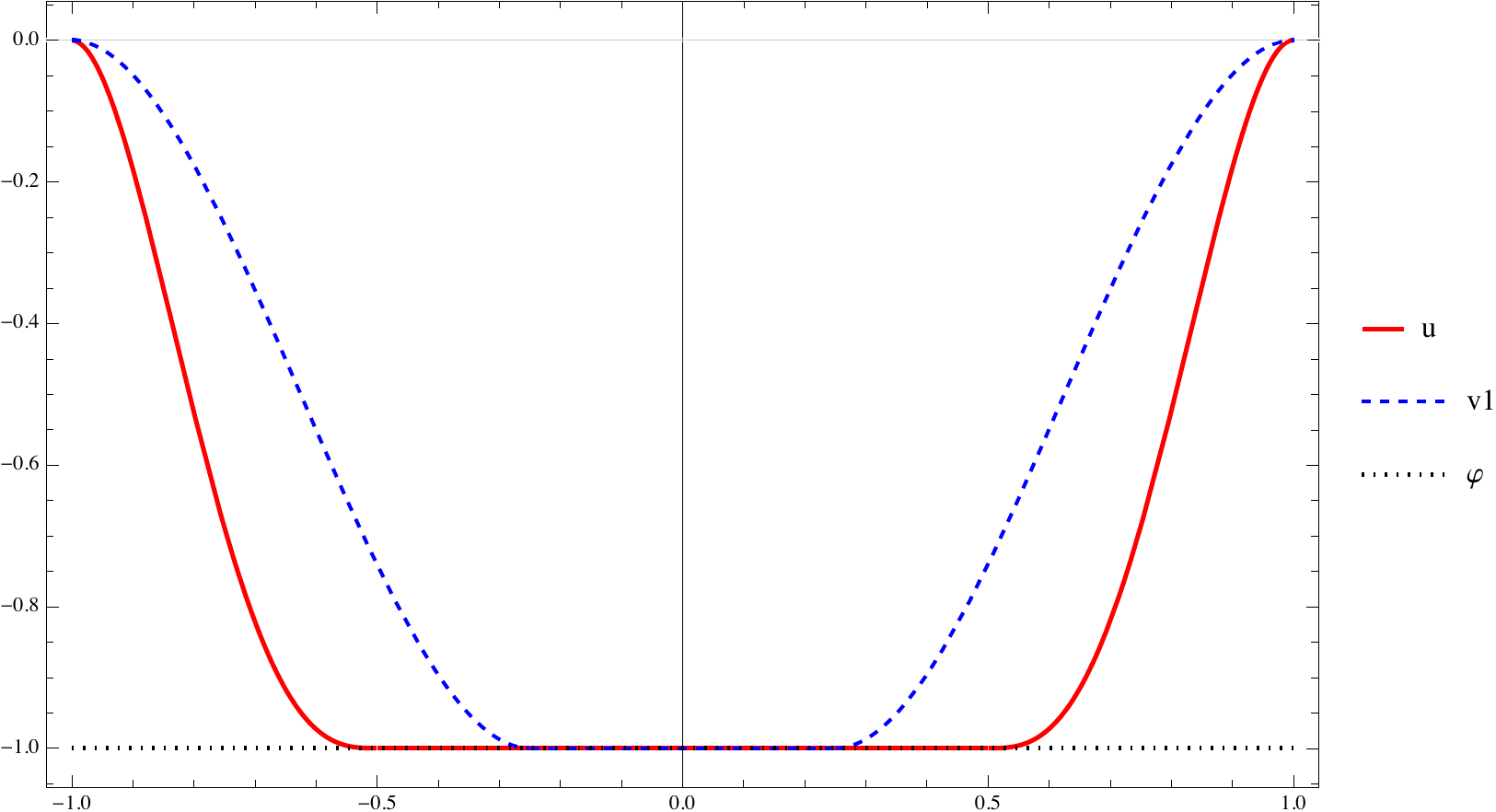}
\caption{The exact  1D-solution $u$  and the approximation solution
$v_1$ }
\label{Exact-Solution-bild22}
\end{figure}
Notice that $n^*$ satisfies the conditions (\ref{regularitynz})  and (\ref{cond-N}) (see Fig.~\ref{Div_n_bild}). According to (\ref{error2}) and (\ref{measure1}) the measure $\boldsymbol{\mu}(v_1)$ consists of two terms. In this example they can be calculated:
\begin{equation} \label{1_term_mu_v2}
\frac{1}{2}\|\nabla\nabla (u-v_1)\|^2=\int\limits_0^1 (u''-v_1'')^2dx=\int\limits_{0.25}^{0.5}(v_1'')^2dx+\int\limits_{0.5}^1(u''-v_1'')^2dx\simeq 125.15
\end{equation}
and
\begin{multline} 
\label{2_term_mu_v2}
\mu_{\varphi}(v_1)=\int\limits_{-0.5}^{0.5}(\dive\Dive\nabla\nabla\varphi  -f)(v_1-\varphi)dx
\\ -\bigl[u'''(-0.5)\bigr](v_1-u)\big|_{x=-0.5}-\bigl[u'''(0.5)\bigr](v_1-u)\big|_{x=0.5}
=152.89.           
\end{multline}
Here, $\bigl[u'''(a)\bigr]:=\big\{u'''(a-0)-u'''(a+0)\big\}$ denotes the jump at the point $a$.
\vskip-0.1cm
\begin{figure}[htbp]
\centering
\includegraphics[width=0.43\textwidth]{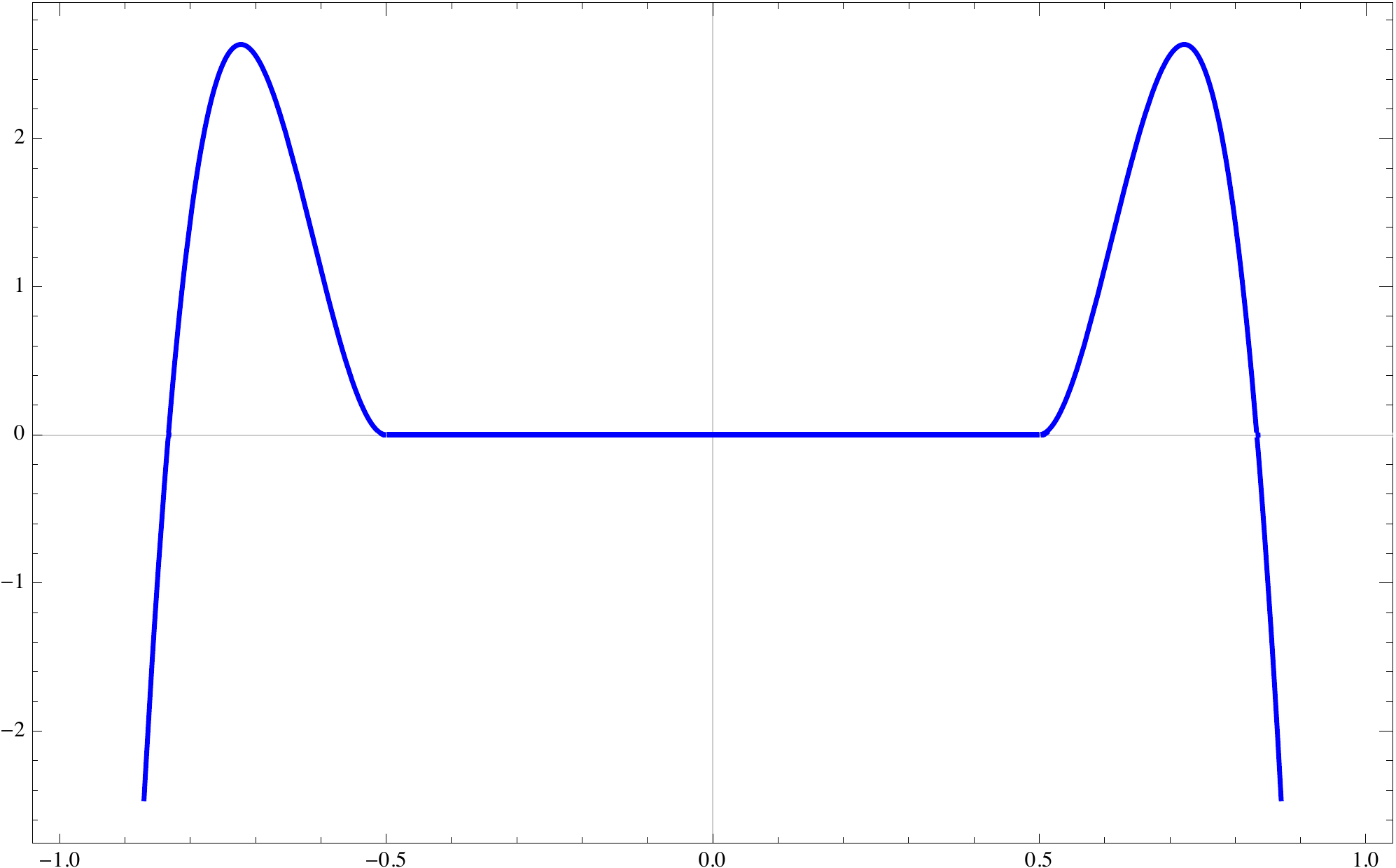}\qquad
\includegraphics[width=0.43\textwidth]{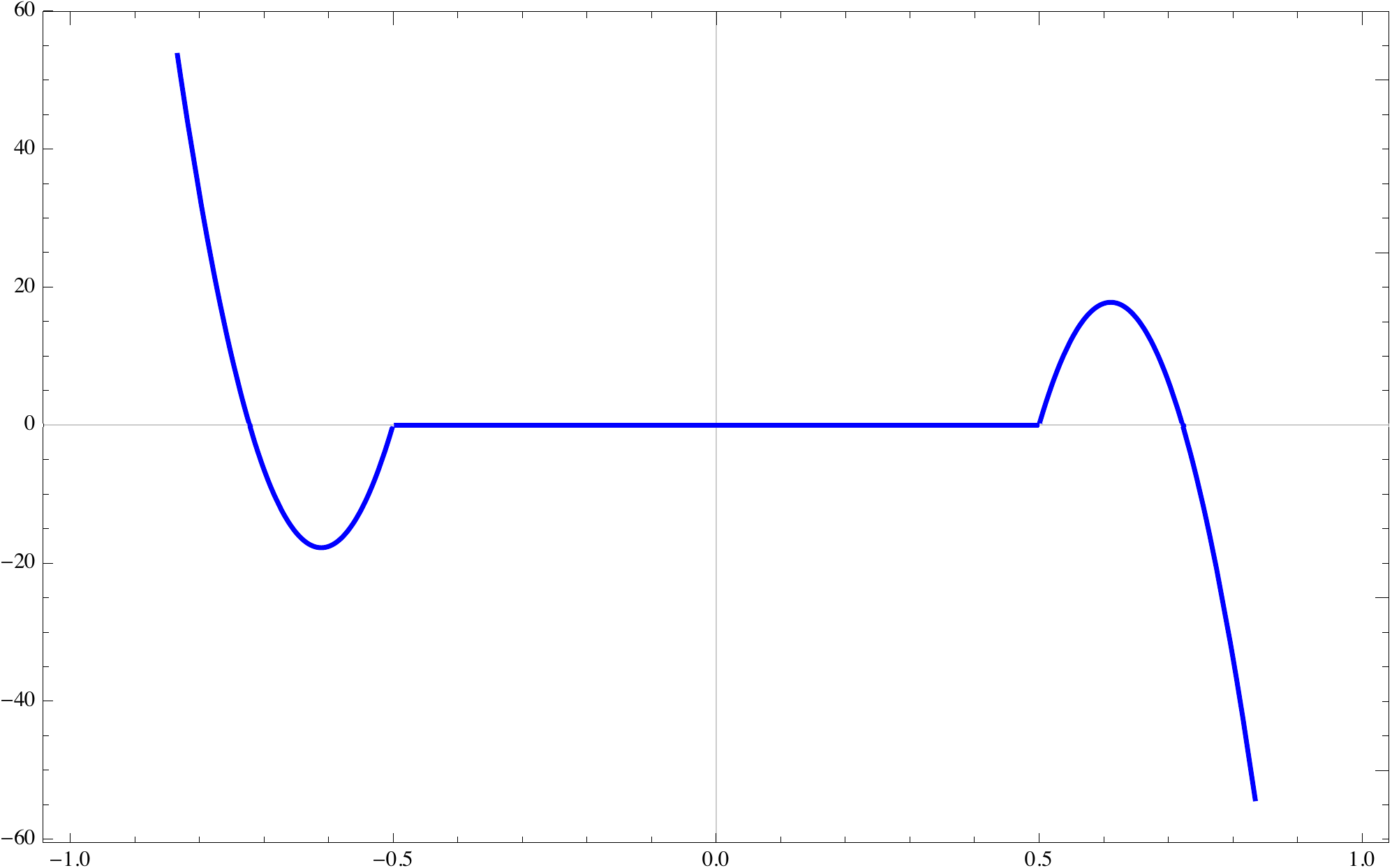}
\caption{The function $n^*$ (left) and 
$(n^*)'$ (right)}
\label{Div_n_bild}
\end{figure}



The measure ${\boldsymbol \mu^*}(n^*)$ is calculated in accordance with  (\ref{error3}) and (\ref{measure2}):
\begin{eqnarray}
 \label{1_term_mu*}
&&\frac{1}{2}\|p^*-n^*\|^2=\int\limits_{0.5}^1(p^*-n^*)^2dx\simeq 74.74,\\
 \label{2_term_mu*}
&&\mu^*_{\varphi}(n^*)=
2\int\limits_{0.5}^1(f\!-\!n'')(\varphi -u)dx\simeq 156.8.
\end{eqnarray}

Combination of (\ref{1_term_mu_v2}), (\ref{2_term_mu_v2}), (\ref{1_term_mu*}), and (\ref{2_term_mu*}) implies the following full error measure for deviations of the functions $v_1$ and $n^*$ from the exact solutions of direct and dual problems, respectively. We obtain
\begin{equation} \label{lhs_for_v1}
\boldsymbol{\mu}(v_1)+\boldsymbol{\mu}^*(n^*)
=278.04+231.54=509.58.
\end{equation}
Consider the right-hand side of the error identity (\ref{7-2-14b}) (since the chosen function $n^*$ satisfies the condition  (\ref{cond-N}) this identity holds).
Direct calculation 
yields
\begin{equation*}
\frac{1}{2}\|\nabla\nabla v_1-n^*\|^2=\int\limits_0^1(v_1''-n^*)^2dx=\int\limits_{0.25}^{0.5}(v_1'')^2dx+\int\limits_{0.5}^1(v_1''-n^*)^2dx
\simeq 23.063
\end{equation*}
and
\begin{equation*}
\int\limits_{-1}^1\!(f-\dive\Dive n^*)(\varphi-v_1)dx\!=\!2\!\int\limits_{0.25}^{0.5}\!f(\varphi\! -v_1)dx+2\!\int\limits_{0.5}^1\!(f-n'')(\varphi \!-\!v_1)dx
\simeq 486.515.
\end{equation*}
Thus, the sum of these terms gives the same value 509.58 as the sum of measures (\ref{lhs_for_v1}).

\smallskip

Next, we take $\tilde{n}^*$ such that $\dive\Dive \tilde{n}^* \in L^2(-1,1)$ but $\tilde{n}^*$ does not satisfy the condition (\ref{cond-N}). Set $\tilde{n}^*$ by the formula
$$
\tilde{n}^*(x)=\left\{
\begin{aligned}
&8(3x+1)^2(6x+5), &&\text{\rm if}\quad -1\leqslant x < -1/3,\\
&0, &&\text{\rm if}\quad -1/3 \leqslant x\leqslant 1/3,\\
&8(3x-1)^2(5-6x), &&\text{\rm if}\qquad\, 1/3<x\leqslant 1.
\end{aligned}
\right.
$$
On the set $x\in (-1,-\frac{17}{18}] \cup [\frac{17}{18},1)$ the condition  (\ref{cond-N}) does not hold.
Therefore, we  cannot use the error identity (\ref{7-2-14b}) but can use the estimate (\ref{2-22}). Let us verify how accurately it holds.

By direct calculations we obtain
\begin{align*}
&\frac{1}{2}\|p^*-\tilde{n}^*\|^2=\int\limits_{1/3}^{0.5}(\tilde{n}^*)^2dx+\int\limits_{0.5}^1(p^*-\tilde{n}^*)^2dx\simeq 24.9137,\\
&\mu_{\varphi}^*(\tilde{n}^*)=2\int\limits_{0.5}^1(f-\dive\Dive \tilde{n}^*)(\varphi-u)dx=2\int\limits_{0.5}^1(f-(\tilde{n}^{*})'')(\varphi-u)dx\simeq 72.0,\\
&\|\nabla\nabla v_1-\tilde{n}^*\|^2=2\int\limits_{0.25}^{1/3}(v_1'')^2dx+2\int\limits_{1/3}^1(v_1''-\tilde{n}^*)^2dx\simeq 66.16,\\
&\frac{1}{2}\|(f-\dive\Dive \widetilde{n}^*)_{\oplus}\|^2=
\int\limits_{17/18}^1(f-(\tilde{n}^{*})'')^2dx\simeq 384.0,\\
&\int\limits_{-1}^1(f-\dive\Dive \tilde{n}^*)(\varphi-v_1)dx=
2\int\limits_{1/3}^{0.5}f(\varphi-v_1)dx+2\int\limits_{0.5}^1(f-(\tilde{n}^{*})'')(\varphi-v_1)dx\\
&\simeq 51.0947+268.267\simeq 319.36.
\end{align*}

Recall that for $\Omega=(-1,1)$ we have 
$C_{F_{\Omega}}=4/\pi^2$. Thus, according to (\ref{2-22}) for any $\beta \in (0,1]$ the majorant $\mathfrak{M}(v_1,\tilde{n}^*,f,\varphi, \beta)$ has the form
\begin{equation*} 
\begin{aligned}
\mathfrak{M}(v_1,\tilde{n}^*,-1152,-1,{\beta})
\simeq {352.44 +33.08\cdot \beta+189.22\cdot \frac{1}{\beta}}.
\end{aligned}
\end{equation*}

Taking into account 
(\ref{1_term_mu_v2}) and (\ref{2_term_mu_v2}), we get  the  expression 
$524.95-150.06\cdot \beta$
 for the left-hand side of the inequality (\ref{2-22}).
Thus,  for any $\beta \in (0,1]$, this inequality takes the form 
\begin{equation}
\label{examp1}
524.95-150.06\, \cdot\beta \leqslant 352.44 +33.08\,\cdot\beta+189.22\, \cdot\frac{1}{\beta}.
\end{equation} 
 
In particular, for $\beta=0.5$ and $\beta=1$ the ratio of the majorant (the r.h.s. of  (\ref{examp1})) to the deviation measure (the l.h.s. of (\ref{examp1}))
is characterized by the values 1.66 and 1.53, respectively.
\vspace{0.05cm}

\vskip-0.2cm
\begin{figure}[htbp!]
\centering
\includegraphics[scale=0.85]{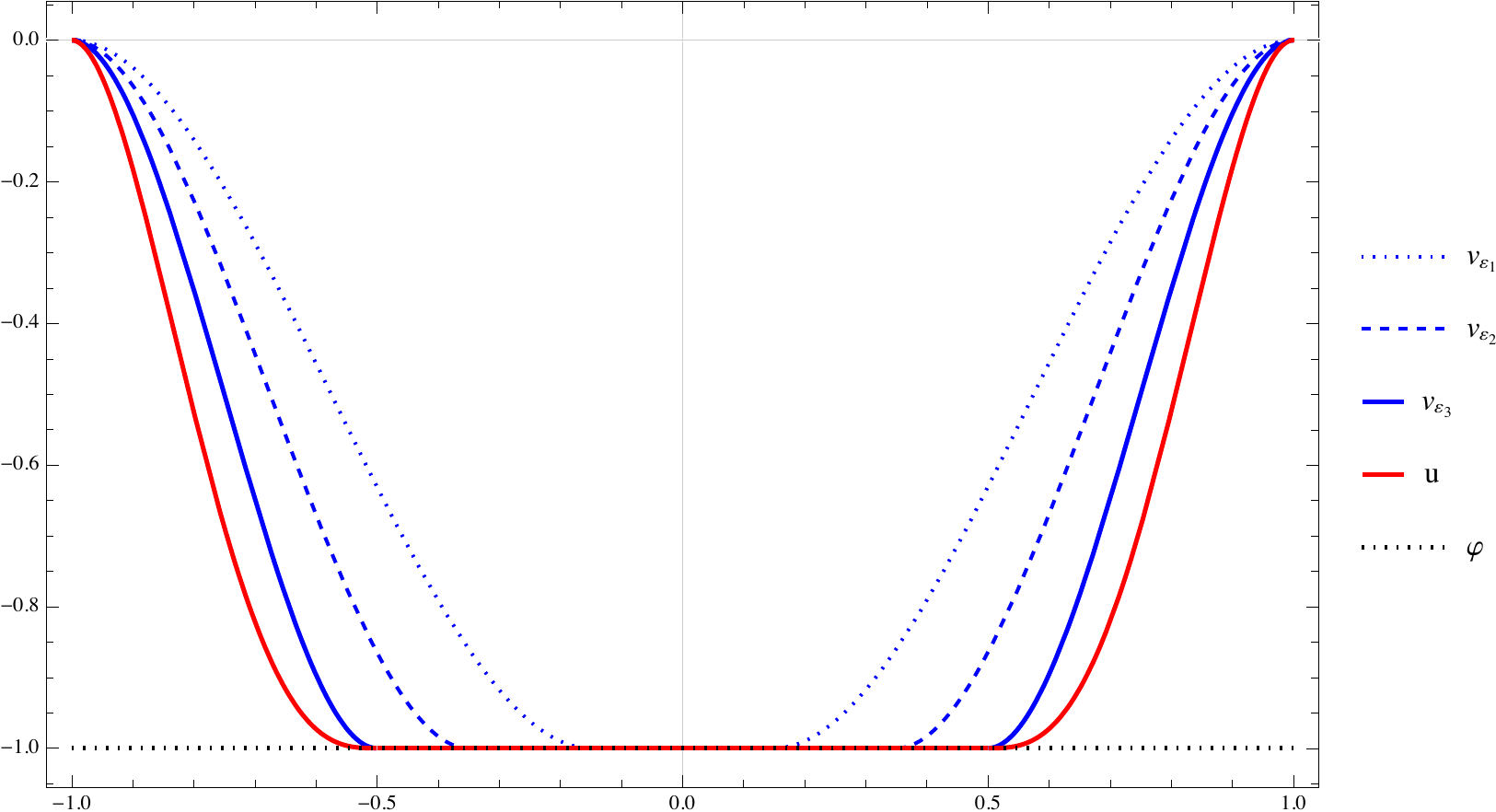}
\caption{The minimizer $u$  and the functions 
$v_{\varepsilon}$ for $\varepsilon_1= 0.35$, $\varepsilon_2=0.15$, $k_2=3$, and $\varepsilon_3=0$. }
\label{Exact-Solution-bild2}
\end{figure}

Further, we consider the approximations (see Fig.~~\ref{Exact-Solution-bild2})
$$
v_{\varepsilon}(x)=\left\{
\begin{aligned}
&-\frac{4}{(2\varepsilon+1)^3}(1+x)^2(-4x+6\varepsilon-1), &&\text{if}\quad -1 \leqslant x < \varepsilon-1/2,\\
&-1, &&\text{if}\quad \varepsilon-1/2\leqslant x \leqslant 1/2-\varepsilon,\\
&-\frac{4}{(2\varepsilon+1)^3}(1-x)^2(4x+6\varepsilon-1), &&\text{if}\quad 1/2-\varepsilon <x \leqslant 1,
\end{aligned}
\right.
$$
where $\varepsilon$ is a parameter satisfying for $0 \leqslant \varepsilon \leqslant 1/2$.
For these approximations we have
$\{x\in \Omega \mid v_{\varepsilon}=-1\} \subset  
\Omega_{\varphi}$.
Notice that
we get a better approach of the coincidence set $\{x\in \Omega \mid u=-1\}$ as $\varepsilon \to 0$. 
Moreover, the function $v_0$ coincides with $\varphi$ on the exact coincidence set, but the function $v_0$ does not coincides with $u$.
\vspace{0.2cm}

Approximations $n_{\varepsilon}^*$ of the exact flux $p^*$ 
are constructed by smoothing the second derivative of $v_{\varepsilon}$ (which replace $\nabla\nabla v_{\varepsilon}$) such that  $(n_{\varepsilon}^*)^{\prime\prime} \in L^2(-1,1)$. The latter corresponds to the condition $n_{\varepsilon}^*\in H(\Omega, \dive\Dive)$.
In particular, if we take 
$$
n_{\varepsilon}^*(x)=\left\{
\begin{aligned}
&\frac{48}{(2\varepsilon +1)^5}(-2x+2\varepsilon-1)(4x-2\varepsilon+3), &&\text{if}\quad -1\leqslant x <\varepsilon-1/2,\\ 
&0, &&\text{if}\quad \varepsilon-1/2 \leqslant x \leqslant 1/2-\varepsilon,\\ 
&\frac{48}{(2\varepsilon +1)^5}(2x+2\varepsilon-1)(-4x-2\varepsilon+3), &&\text{if}\quad 1/2-\varepsilon <x \leqslant 1,         
\end{aligned}
\right.
$$
then (\ref{cond-N}) is satisfied, and again we have to verify the validity of the error identity (\ref{7-2-14b}).

Table~\ref{tab:primal} contains results related to the components of $\boldsymbol{\mu}(v_{\varepsilon})$ computed for $\varepsilon=0.05j$, $j=7, 5, 3, 1$, and $0$. It shows that both  terms (quadratic and nonlinear) decrease as $\varepsilon \to 0$.
However, the first term remains positive, since the sequence of approximate solutions does not tend to the exact solution $u$, while the second term tends to zero, because the corresponding sequence of the approximated coincidence sets tends to the exact set $\Omega_{\varphi}$. It is easy to see that the sum of these two terms, constituting a measure $\boldsymbol{\mu}(v_{\varepsilon})$, is equal to the deviation of $J(v_\varepsilon)$  from the exact minimum of the direct variational problem.
In the last column of Table~\ref{tab:primal}, we present the relative contribution of the nonlinear measure $\mu_{\varphi}(v_{\varepsilon})$ expressed by the quantity
$$
k(v_{\epsilon}):=100 \frac{\mu_{\varphi}(v_{\varepsilon})}{\boldsymbol{\mu}(v_{\varepsilon})} \quad [\%].
$$

\begin{table}[htbp]
  \centering
  \begin{tabular}{ccccccccc}
  \hline
  \ & \ &\ &\ &\ &\ &\ &\ & \\
  $\varepsilon$ & \quad & $\frac{1}{2}\|\nabla\nabla (u-v_{\varepsilon})\|^2$& \quad & $\mu_{\varphi}(v_{\varepsilon})$&\quad & $J(v_{\varepsilon})-J(u)$& \quad &  
  $k(v_{\epsilon})\ [\%]$  \\
\ & \ &\ &\ &\ &\ &\ &\ & \\
  \hline
\ & \ &\ &\ &\ &\ &\ &\ & \\
  0.35 & \quad & 134.060 &\quad & 250.280 & \quad & 384.340 & \quad & 65.12\\ 
  0.25 & \quad & 125.156 & \quad & 152.889 & \quad & 278.044 & \quad & 54.99\\ 
  0.15 & \quad & 109.904 & \quad & 68.192 & \quad & 178.096
  & \quad & 38.29\\ 
  0.05 & \quad & 81.474  & \quad & 9.852 & \quad & 91.326 & \quad & 1.06\\ 
  0.00 & \quad & 57.60 & \quad & 0 & \quad & 57.60 & \quad & 0\\ 
 \ & \ &\ &\ &\ &\ &\ &\ & \\
  \hline
  \end{tabular}
  \caption{Components of the measure $\boldsymbol{\mu}(v_{\varepsilon})$.}
  \label{tab:primal}
\end{table}

Table~\ref{tab:dual} encompasses the components of $\boldsymbol{\mu}^*(n^*_{\varepsilon})$. 
As in the case of the measure $\mu_{\varphi}(v_{\varepsilon})$,
both quadratic and nonlinear terms decrease as $\varepsilon \to 0$. Nevertheless, this nonlinear measure does not tend to zero. The measure controls the violation of the equation $\dive\Dive n^*=f$  on the set $\Omega_0$. Function  $n^*_\varepsilon$ does not satisfy the latter equation for any $\varepsilon$, and, consequently, $\mu^*(n^*_\varepsilon)>0$. Moreover, in this case the measure $\mu^*_{\varphi}(n^*_{\varepsilon})$ evidently dominates the first term,
which confirmed by the relative contribution
$$
k(n^*_{\varepsilon}):=100 \frac{\mu^*_{\varphi}(n^*_{\varepsilon})}{\boldsymbol{\mu}^*(n^*_{\varepsilon})}\quad [\%]
$$
listed in the last column. 

\begin{table}[htbp]
  \centering
  \begin{tabular}{ccccccccc}
  \hline
  \ & \ &\ &\ &\ &\ &\ &\ & \\
  $\varepsilon$ & \quad & $\frac{1}{2}\|p^*-n_{\varepsilon}\|^2$& \quad & $\mu_{\varphi}^*(n^*_{\varepsilon})$&\quad & $\boldsymbol{\mu}^*(n^*_{\varepsilon})=I^*(p^*)-I^*(n^*_{\varepsilon})$& \quad &  
  $k(n^*_{\epsilon})\ [\%]$  \\
\ & \ &\ &\ &\ &\ &\ &\ & \\
  \hline
\ & \ &\ &\ &\ &\ &\ &\ & \\
  0.35 & \quad & 119.444  &\quad & 422.937 & \quad & 542.381 & \quad & 77.98 \\ 
  0.25 & \quad & 109.443 & \quad & 400.119 & \quad & 509.562 & \quad & 78.52\\ 
  0.15 & \quad & 95.972 & \quad & 357.378  & \quad & 453.349
  & \quad & 78.83\\ 
  0.05 & \quad &  78.510 & \quad & 270.053 & \quad & 348.563 & \quad & 77.48\\ 
  0.00 & \quad & 68.571 & \quad & 192  & \quad & 260.571 & \quad & 73.68\\ 
 \ & \ &\ &\ &\ &\ &\ &\ & \\
  \hline
  \end{tabular}
  \caption{Components of the measure $\boldsymbol{\mu^*}(n^*_{\varepsilon})$.}
  \label{tab:dual}
\end{table}

Table~\ref{tab:primal-dual} reports on the  error identity (\ref{7-2-14b}). First two columns correspond to the two terms forming the right-hand side of (\ref{7-2-14b}). Here, $\mathcal{N}(v_{\epsilon}, n^*_{\varepsilon})$ denotes the term $\int_{-1}^1 (f-(n^*_{\varepsilon})^{\prime\prime})(\varphi -v_{\varepsilon})dx$ corresponding to the summand $\int_{\Omega}(f-\dive\Dive n^*_{\varepsilon})(\varphi -v_{\varepsilon})dx$ in the identity. The sum of these terms is given in the third column. It coincides exactly with the sum of measures given in Tables 1 and 2. Observe that the values in the first two columns of Table 3 are computed directly by functions $v_{\varepsilon}$ and $n^*_{\varepsilon}$. These functions can be considered as approximate solutions constructed with the help of some computational procedure. Table~\ref{tab:primal-dual} shows that the sum
$$
\frac{1}{2}\|\nabla\nabla v_{\varepsilon}-n^*_{\varepsilon}\|^2+\mathcal{N}(v_{\varepsilon}, n^*_{\varepsilon})
$$
is a good and easily computed characteristic of the quality of approximate solutions.

\begin{table}
  \centering
  \begin{tabular}{ccccccccc}
  \hline
  \ & \ &\ &\ &\ &\ &\ &\ & \\
  $\varepsilon$ & \ & $\frac{1}{2}\|\nabla\nabla v_{\varepsilon}-n_{\varepsilon}\|^2$& \ & $\mathcal{N}(v_{\varepsilon}, n^*_{\varepsilon})$&  \ & r.h.s. of (\ref{7-2-14b})&\ & $\boldsymbol{\mu}(v_{\varepsilon})+\boldsymbol{\mu}^*(n^*_{\varepsilon})$  \\
\ & \ &\ &\ &\ &\ &\ & \\
  \hline
\ & \ &\ &\ &\ &\ &\ & \\
  0.35 & \ & 10.049  &\ & 916.672 & \ & 926.721 & \ & 926.721  \\ 
  0.25 & \ & 14.629 & \ & 772.978 & \ & 787.606 & \ &  
  787.606 \\ 
  0.15 & \ & 22.472 & \ & 608.973  & \ & 631.445 & \ & 631.445   \\ 
  0.05 & \ & 37.094  & \ & 402.796 & \ & 439.890  & \ & 439.890 \\ 
  0.00 & \ & 49.371 & \ & 268.800 & \ & 318.171 & \ & 318.171 \\ 
 \ & \ &\ &\ &\ &\ &\ & \\
  \hline
  \end{tabular}
  \caption{Components of the identity (\ref{7-2-14b}) computed for $v_{\varepsilon}$ and $n^*_{\varepsilon}$ for different $\varepsilon$.}
  \label{tab:primal-dual}
\end{table}

\subsection{Bending of a circular plate} 

Let $\Omega=B_3 \subset \mathbb{R}^2$, where $B_3$ denotes the open ball with the center at the origin and radius $3$. In this case, the problem $\mathcal P$ can be considered as a simplified version of the bending problem for a clamped elastic circular plate above the plane obstacle $\varphi \equiv -1$ under the action of an external force $f$. Simplification is that we replace the tensor of the elastic constants by the unit tensor. In the context of the considered issues such a simplification does not play a significant role. We set
$$
f\equiv 16c_1/c_2, \quad \text{where}\quad c_1=9\ln{3}-4, \qquad c_2=208-216\ln{3}+9\ln^2{3}.
$$

Notice that for the given data we can explicitly define the radial solution of the problem (\ref{int}). In the polar coordinates $(r, \theta)$, the minimizer has  the following form:
$$
u(r, \theta)=\left\{
\begin{aligned}
&-1, &&\text{for}\ 0<r<1,\\
&\frac{(r^2-1)(128+c_1(r^2-3))+4(c_1-32(1+r^2))\ln{r}}{4c_2}-1, &&\text{for}\ 1\leqslant r \leqslant 3.
\end{aligned}
\right.
$$

It is clear that $\Delta^2u=f_0$ in $B_3\setminus B_1$, $u \geqslant -1$ in $B_3$, and $u(3, \theta)=\dfrac{\partial u}{\partial r}(3, \theta)=0$ for all $\theta \in [0,2\pi)$. 
An elementary calculation shows that
$$
p^*=\left(
\begin{array}{cc}
p_{11}^* & p_{12}^*\\
p_{21}^* & p_{22}^*
\end{array}
\right), 
$$
where $
p_{11}^*=p_{12}^*=p_{21}^*=p_{22}^*=0$ in the ball $B_1$, while for $(r, \theta) \in B_3\setminus B_1$, the components of $p^*$ are defined by the formulas 
\begin{align*}
p_{11}^*&=
\frac{(r^2-1)\cos{(2\theta)}(c_1(r^2+1)-32)+2r^2(c_1(r^2-1)-32\ln{r})}
{c_2r^2}, 
\\
p_{12}^*=
p_{21}^*&=
\frac{(r^2-1)\sin{(2\theta)}(c_1(r^2+1)-32)}{c_2r^2}, \\
p_{22}^*&=
\frac{(1-r^2)\cos{(2\theta)}(c_1(r^2+1)-32)+2r^2(c_1(r^2-1)-32\ln{r})}
{c_2r^2}. 
\end{align*}
It is easy to check that $\dive\Dive p^* \notin L^2(B_3)$ and $\dive\Dive p^*=f_0$ on the set $B_3 \setminus B_1=\Omega\setminus \{u=\varphi\}$.

Further, we define the function $v=v_2$  as follows:
$$
v_2(r, \theta)=u(r, \theta)+\left\{
\begin{aligned}
&0.5 [1-cos{(\pi (3-r))}], && \text{\rm if}\quad 1\leqslant r\leqslant 3,\\
&0, && \text{\rm if}\quad  0<r<1.
\end{aligned}
\right. 
$$
 It is clear that $v_2 \in \mathbb{K}$ and $v_2 \geqslant u$ in $\Omega$ and $v_2=u$ in $B_1$. Thus, $v_2$ has the same coincidence  set $B_1$ as the minimizer $u$ (see Fig.~\ref{Exact-Solution-bild}).
 \vskip-0.2cm
\begin{figure}[htbp]
\centering
\includegraphics[width=0.45\textwidth]{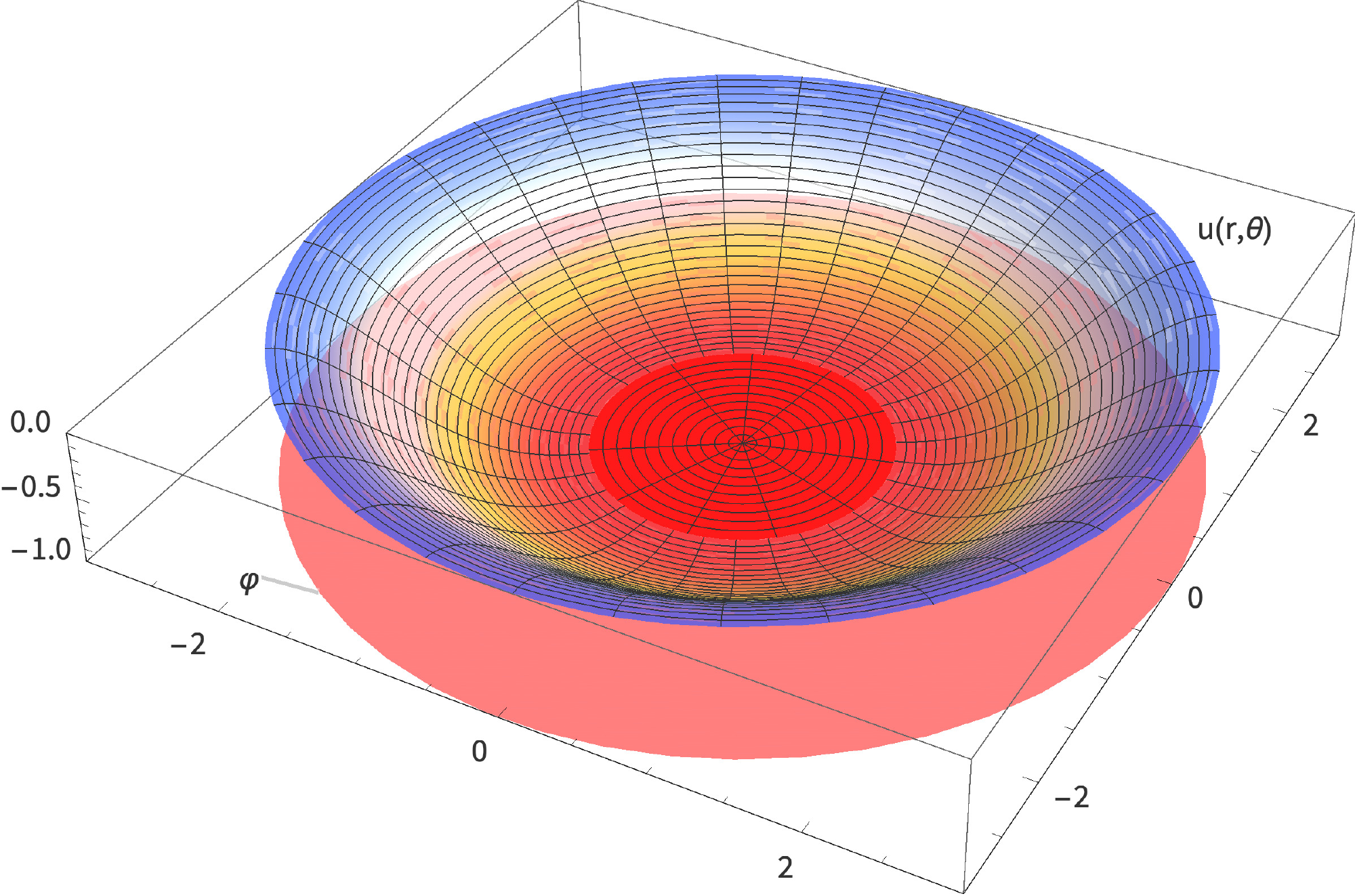}\qquad
\includegraphics[width=0.45\textwidth]{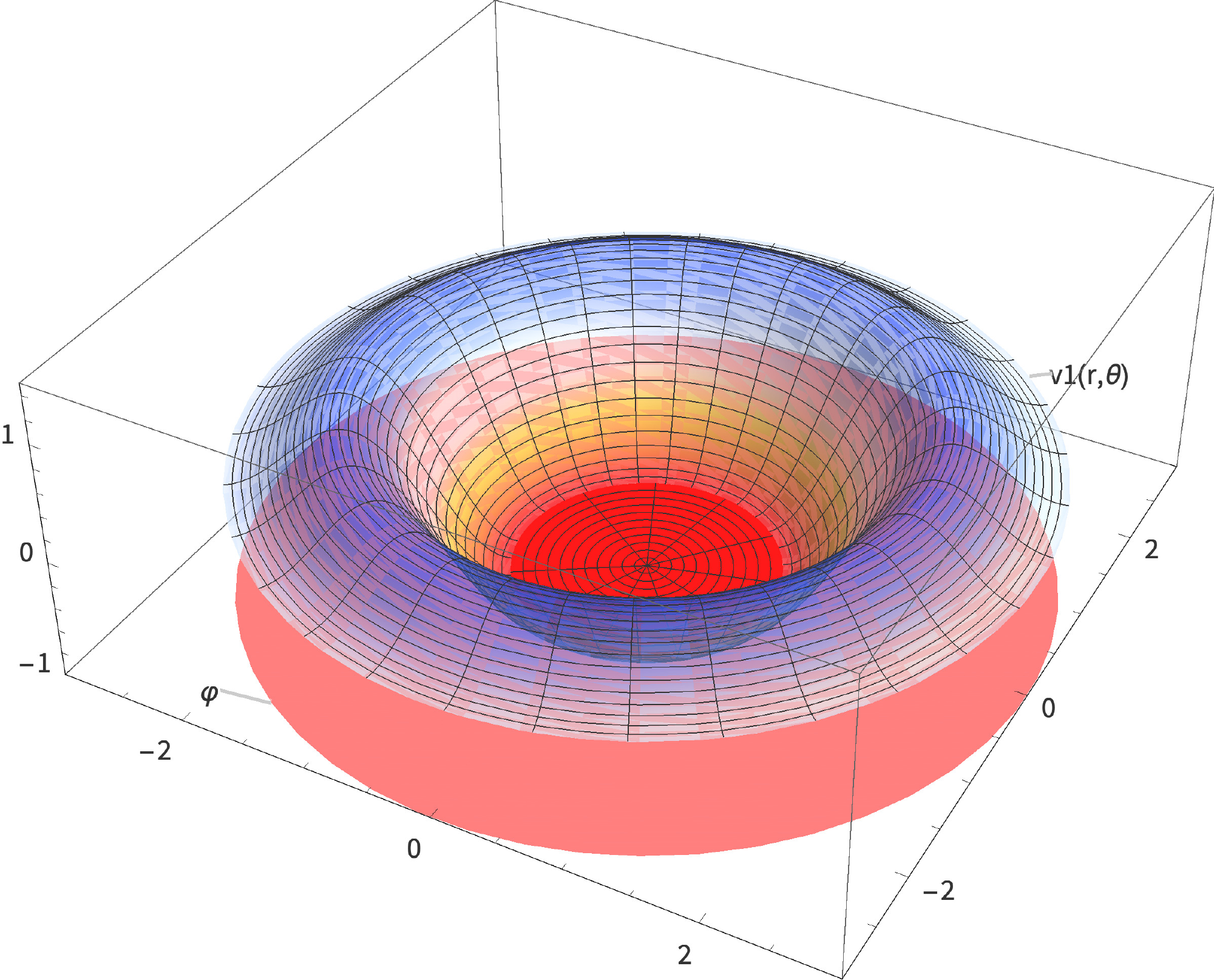}
\caption{The exact solution $u$ (left) and the function 
$v_2$ (right).}
\label{Exact-Solution-bild}
\end{figure}

By direct computations of the measure (\ref{error2}) we get 
\begin{equation} \label{mu_v1}
\boldsymbol{\mu}(v_2)=\frac{1}{2}\|\nabla \nabla (u-v_2)\|^2+\mu_{\varphi}(v_2)=\frac{1}{2}\|\nabla \nabla (u-v_2)\|^2 \simeq 157.19.
\end{equation}
We point out that $\mu_{\varphi}(v_2)=0$, since $\{x\in \Omega \mid v_2=-1\}=\Omega_\varphi$.

Let us set now
$$
\hat{n}^*=\left(
\begin{array}{cc}
\hat{n}_{11}^* & \hat{n}_{12}^*\\
\hat{n}_{21}^* & \hat{n}_{22}^*
\end{array}
\right),
$$
where 
\begin{align*}
\hat{n}_{11}^*=\hat{n}_{22}^*&=\left\{
\begin{aligned}
&0, && \text{for}\ (r, \theta) \in B_1,\\
&\frac{(r-1)^3\cos{(2\theta)}}{c_2r^2}, && \text{for}\ (r, \theta) \in B_3\setminus B_1, 
\end{aligned}
\right.\\
\hat{n}_{12}^*=
\hat{n}_{21}^*&=\left\{
\begin{aligned}
&0, && \text{for}\ (r, \theta) \in B_1,\\
&\frac{9(r-1)^3\sin{(2\theta)}}{c_2r^2}, && \text{for}\ (r, \theta) \in B_3\setminus B_1.
\end{aligned}
\right.
\end{align*}
The function $\hat{n}^*$
satisfies the condition (\ref{cond-N}). Therefore, 
we can verify the validity of the identity (\ref{7-2-14b}).

Computing the error measure $\boldsymbol{\mu}^*(\hat{n}^*)$ defined by (\ref{error3}) and (\ref{measure2}), we get
\begin{equation} \label{mu-n*}
\begin{aligned}
\boldsymbol{\mu}^*(\hat{n}^*)&=\frac{1}{2}\|p^*-\hat{n}^*\|^2+\int_{B_3 \setminus B_1} (f-\dive\Dive \hat{n}^*)(-1 -u)rdrd\theta\\
&\simeq 14.84+ 63.46 \simeq 78.30.
\end{aligned}
\end{equation}
Combination of (\ref{mu_v1}) and (\ref{mu-n*}) implies the following value for the full error measure (the result is rounded to two decimal places)
\begin{equation} \label{lhs-2-11}
\boldsymbol{\mu}(v_2)+\boldsymbol{\mu}^*(\hat{n}^*)\simeq 157.19+ 78.30 = 235.49.
\end{equation}

To compute the terms on the right-hand side of the identity 
(\ref{7-2-14b}), we use only the functions  $v_2$ and $\hat{n}^*$ (known approximations of the exact solutions). The sum of these terms gives the same value as (\ref{lhs-2-11}):
\begin{equation*} 
\frac{1}{2}\|\nabla \nabla v_2 -\hat{n}^*\|^2-
\int\limits_{B_3}(f-\dive\Dive \hat{n}^*)(1+v_2)rdr d\theta = 111.15 + 124.34 =235.49.
\end{equation*}

\subsection*{Acknowledgments}
D.A. was supported by the German Research Foundation, grant no. AP 252/3-1 and by the "RUDN University program 5-100".

\bibliography{Bibliography_Biharmonic}
\end{document}